\documentclass[12pt]{amsart}
\pdfoutput=1
\usepackage[headings]{fullpage}
\usepackage{graphicx}
\usepackage{url}
\usepackage[vlined,boxruled,norelsize]{algorithm2e}

\usepackage[bookmarks=true,%
    colorlinks=true,%
    linkcolor=blue,%
    citecolor=blue,%
    filecolor=blue,%
    menucolor=blue,%
    urlcolor=blue,%
    breaklinks=true]{hyperref}

\newtheorem{theorem}{Theorem}[section]
\newtheorem{lemma}[theorem]{Lemma}
\theoremstyle{definition}
\newtheorem{definition}[theorem]{Definition}
\newtheorem{remark}[theorem]{Remark}
\newtheorem{example}[theorem]{Example}

\catcode`\@=11
\long\def\@makecaption#1#2{%
     \vskip 10pt

\setbox\@tempboxa\hbox{%\ifvoid\tinybox\else\box\tinybox\fi
       \small\sf{\bfcaptionfont #1. }\ignorespaces #2}%
     \ifdim \wd\@tempboxa >\captionwidth {%
         \rightskip=\@captionmargin\leftskip=\@captionmargin
         \unhbox\@tempboxa\par}%
       \else
         \hbox to\hsize{\hfil\box\@tempboxa\hfil}%
     \fi}
\font\bfcaptionfont=cmssbx10 scaled \magstephalf
\newdimen\@captionmargin\@captionmargin=2\parindent
\newdimen\captionwidth\captionwidth=\hsize
\catcode`\@=12

\SetAlgorithmName{Pseudocode}{List of Pseudocode}

\newcommand\algorithmref[1]{Pseudocode~\ref{#1}}

\newcommand\BZ{\mathbb{Z}}
\newcommand\BH{\mathbb{H}}
\newcommand\BS{\mathbb{S}}
\newcommand\BE{\mathbb{E}}

\newcommand\notCensus{$\mbox{}^*$}

\newcommand\Comm{\operatorname{Comm}}
\newcommand\Isom{\operatorname{Isom}}

\newcommand\mfdMarker[1]{\mathrm{\mathcal{#1}}}
\newcommand\MunivPrinCong[2]{\mfdMarker{U}^{#1}_{#2}}

% Fixes a nasty bug due to BibTeX using a different font metrics resulting in
% it computing the wrong longest label. This allows one to force the label.
% Code taken from
% http://tex.stackexchange.com/questions/2659/fixing-the-bibliography-key-margin-when-using-amsalpha
%
% For arXiv: Don't use this function, instead change the \begin{thebibliography}{LABEL} directly.
%
\newcommand\forceLongestBibliographyLabelToFixBibTeXBug[1]{
    \let\stdthebibliography\thebibliography
    \let\stdendthebibliography\endthebibliography
            {\stdthebibliography{#1}}
            {\stdendthebibliography}
}

\begin{document}

\title[A census of hyperbolic Platonic manifolds and aug. knotted trivalent graphs]{A census of hyperbolic Platonic manifolds\\ and augmented knotted trivalent graphs\\~\\Appendix:\\
Hyperbolic ideal cubulations can be subdivided\\ into ideal geometric triangulations}
\author[Goerner]{Matthias Goerner} 
\address{Pixar Animation Studios\\ 
         1200 Park Avenue\\ 
         Emeryville, CA 94608, USA \newline
         {\tt \url{http://www.unhyperbolic.org/}}}
\email{enischte@gmail.com}
\subjclass{Primary 57N10. Secondary 57M25.}
\keywords{hyperbolic 3-manifolds, regular ideal Platonic solids,
census, Platonic manifolds.}

\date{March 8th, 2017}

\begin{abstract}
We call a 3-manifold {\em Platonic} if it can be decomposed into isometric Platonic solids. Generalizing an earlier publication by the author and others where this was done in case of the hyperbolic ideal tetrahedron, we give a census of hyperbolic Platonic manifolds and all of their Platonic tessellations. For the octahedral case, we also identify which manifolds are complements of an augmented knotted trivalent graph and give the corresponding link. A (small version of) the Platonic census and the related improved algorithms have been incorporated into \texttt{SnapPy}. The census also comes in \texttt{Regina} format.
\end{abstract}

\maketitle

\vspace{-0.5cm}

\setcounter{tocdepth}{1}

\tableofcontents

\vspace{-1.1cm}

\section{Introduction}

\subsection{Platonic manifolds}

We call a spherical, Euclidean, or hyperbolic 3-manifold {\em Platonic} if it can be decomposed into isometric finite or ideal Platonic solids. We call such a decomposition a {\em Platonic tessellation}. There exists Platonic manifolds that admit more than one Platonic decomposition, thus we use the two terms Platonic manifold and Platonic tessellation to distinguish whether we regard objects as isomorphic when they are isometric as manifolds or combinatorially isomorphic as tessellations. The goal of this paper is to create a census of such manifolds and tessellations.

It is motivated by the fact that many manifolds that have played a key role in the development of low-dimensional topology are Platonic. Examples include the Seifert-Weber space (which has a ``homology sister'' obtained by gluing the same hyperbolic dodecahedron, see Section~\ref{sec:seifertWeber}) as well as exactly three knot complements (the figure-eight knot and the two dodecahedral knots in \cite{AR:dodecahedral}, see \cite{reid:arithmeticity,hoffman:dodecahedral}) and many link complements such as the complement of the Whitehead link and the Borromean rings. Furthermore, Baker showed that each link is a sublink with octahedral and, thus, Platonic complement \cite{Baker:AllLinksAreSublinksOfArithmeticLinks}. This also follows from van der Veen's work \cite{Roland:AKTG} showing that the complement of an augmented knotted trivalent graph ({\em AugKTG}\/) is octahedral and in Section~\ref{sec:AugKTG} we enumerate AugKTGs up to complements with 8 octahedra.

Two examples of Platonic manifolds that exhibit many symmetries are the complements of the minimally twisted 5-component chain link and the Thurston congruence link \cite{Th:see,Agol:CongruenceLink}. Both are principal congruence manifolds as well as regular tessellations in the sense of Definition~\ref{def:RegTess}. Baker and Reid enumerated all known principal congruence links \cite{bakerReid:prinCong} and
the author showed that there are at most 21 link complements admitting a regular tessellation \cite{Goerner:RegTessLinkComps}.

The census of Platonic manifolds and tessellations illustrates a number of interesting phenomena such as commensurability (in particular, of tetrahedral and cubical manifolds) and the difference between arithmetic and non-arithmetic manifolds with implications on hidden symmetries and the existence of Platonic manifolds admitting more than one Platonic tessellation, which we will discuss in Section~\ref{sec:propPlat}.

The author and others previously provided such a census in the case of the tetrahedron \cite{FGGTV:tetrahedralCensus}. Everitt did similar work but considered manifolds consisting of only a single Platonic solid \cite{Everitt:manifoldsFromPlatonicSolids}.

\subsection{Results}

A {\em tetrahedral}, {\em octahedral}, {\em icosahedral}, {\em cubical}, or {\em dodecahedral} tessellation or manifold is a hyperbolic Platonic tessellation or manifold made from the respective Platonic solid. We call it {\em closed} or {\em cusped} depending on whether the vertices of the solid are finite or ideal. Unless prefixed by {\em right-angled}, the term {\em closed dodecahedral} tessellation or manifold exclusively refers to the case where 5 (not necessarily distinct) dodecahedra are adjacent to an edge and geometrically have a dihedral angle of $2\pi/5$, i.e., $\{5,3,5\}$ in the notation introduced in Section~\ref{subsec:RegTess}. We will not cover right-angled closed dodecahedral tessellations $\{5,3,4\}$ since they are dual to closed cubical tessellations $\{4,3,5\}$, see Table~\ref{tbl:modelRegTess}.

\begin{theorem}\label{thm:Main}
The numbers of hyperbolic Platonic tessellations and manifolds up to a certain number of Platonic solids are listed in Table~\ref{table:census336}, \ref{table:census344}, \ref{table:census436}, \ref{table:census536}, \ref{table:census353}, \ref{table:census435}, and \ref{table:census535}.  Table~\ref{table:census344} also lists the number of octahedral manifolds that are complements of AugKTGs.
\end{theorem}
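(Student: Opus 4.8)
\section*{Proof proposal}

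The plan is to prove Theorem~\ref{thm:Main} by an exhaustive, computer-assisted enumeration carried out independently for each of the seven families --- the cusped tetrahedral $\{3,3,6\}$, octahedral $\{3,4,4\}$, cubical $\{4,3,6\}$ and dodecahedral $\{5,3,6\}$ tessellations, and the closed icosahedral $\{3,5,3\}$, cubical $\{4,3,5\}$ and dodecahedral $\{5,3,5\}$ tessellations. Fix a family and a bound $N$ on the number of solids. A Platonic tessellation built from $n \le N$ copies of the solid is determined by a face-pairing on those $n$ copies, so there are only finitely many candidates, and they can be generated one gluing at a time by standard isomorph-free backtracking, as in the tetrahedral census \cite{FGGTV:tetrahedralCensus}. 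For each candidate I would test the two conditions that make the gluing a Platonic manifold: (i) every edge is surrounded by exactly $r$ solids, where $r$ is the last entry of the Schläfli symbol $\{p,q,r\}$, so that the fixed dihedral angle of the regular (finite or ideal) hyperbolic solid sums to $2\pi$ around each edge; and (ii) every vertex link is a $2$-sphere in the closed case and a torus (or, if non-orientable manifolds are included, a Klein bottle) in the cusped case. Condition~(i) removes all cone singularities along edges, and since the face identifications of a regular ideal polyhedron restrict to Euclidean isometries of the horospherical vertex cross-sections, the resulting complete hyperbolic structure exists automatically; by Mostow--Prasad rigidity it is unique. Hence, once these checks are in place, counting tessellations is purely combinatorial.

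Next I would reduce the list of valid gluings to combinatorial-isomorphism classes by computing a canonical form (an isomorphism signature) for each tessellation; this yields the tessellation counts recorded in the tables. Passing from tessellations to manifolds requires detecting when two non-isomorphic Platonic tessellations are isometric --- exactly the phenomenon discussed in Section~\ref{sec:propPlat}. For the cusped families I would subdivide every Platonic solid into ideal tetrahedra and pass to the Epstein--Penner canonical retriangulation, whose isomorphism signature is a complete isometry invariant. Here the appendix enters: it guarantees that every ideal hyperbolic cubulation admits a subdivision into a \emph{geometric} (positively oriented) ideal triangulation, so that the verified canonicalization machinery of \texttt{SnapPy} applies to the cubical families as well. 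For the closed families one instead compares verified Dirichlet domains or initial segments of the length spectrum. Grouping the tessellations by these invariants produces the manifold counts.

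Finally, for the AugKTG column of Table~\ref{table:census344} I would invoke van der Veen's result \cite{Roland:AKTG} that the complement of an augmented knotted trivalent graph is octahedral, with the number of octahedra controlled by the size of the underlying graph diagram. I would enumerate all AugKTGs whose complements have at most $8$ octahedra by enumerating the finitely many underlying trivalent-graph diagrams of bounded size modulo the moves that do not change the complement, build each complement, retriangulate it, and compute its isometry signature; intersecting the resulting set with the octahedral manifold census gives the claimed count and, as a byproduct, the explicit links recorded in Section~\ref{sec:AugKTG}.

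The main obstacle is not any single step but \emph{rigor and completeness} taken together: one must be certain that the backtracking exhausts all face-pairings --- and all AugKTG diagrams --- up to the stated bounds with no omission, and that every isometry identification is exact rather than merely numerical. The second point is handled by working with interval-arithmetic verified hyperbolic structures throughout, together with the subdivision theorem of the appendix for the cubical tessellations; the first reduces to a careful but essentially routine accounting of the combinatorial search space and its symmetries.
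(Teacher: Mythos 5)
Your overall strategy---isomorph-free backtracking over face-pairings with edge-order and vertex-link checks, canonical forms to count tessellations, and verified canonical retriangulations (isometry signatures) to pass from cusped tessellations to cusped manifolds---matches the paper's method closely, as does your treatment of the AugKTG column by enumerating diagrams modulo moves and intersecting with the octahedral census. The one place where your argument has a genuine gap is the closed case. You propose to distinguish and identify closed manifolds by ``verified Dirichlet domains or initial segments of the length spectrum.'' An initial segment of the length spectrum is only a separating invariant: when two tessellations have matching segments you have proved nothing, since the invariant is not complete (and isospectral non-isometric hyperbolic manifolds exist), so ``grouping the tessellations by these invariants produces the manifold counts'' does not follow. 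A correct argument needs both directions: an invariant that \emph{separates} the classes, and, for every pair of tessellations sharing the same invariant value, an explicitly exhibited isometry certifying that they really are the same manifold. The paper does exactly this, using the multiset of first homology groups of covers up to a given degree (the invariants $C_n(M)$, $C^{\mathrm{SnapPy}}(M)$, and an ad hoc refinement $C^{\mathrm{proprietary}}(M)$) to separate, and then verifying each resulting group by finding combinatorial isomorphisms/isometries between all of its members. Verified Dirichlet domains could in principle supply both directions, but you would need to say how the base point is chosen canonically and how the comparison is certified; as stated, your closed-case step does not yield the counts in Tables~\ref{table:census353}, \ref{table:census435}, and \ref{table:census535}.

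Two smaller remarks. First, the paper enumerates via the barycentric subdivision of the solids with a specialized isomorphism signature (to save memory and allow a uniform treatment of all five solids); your direct face-pairing search is an acceptable variant of the same idea, not a mathematical difference. Second, for the cusped identification the appendix is indeed what guarantees that a cubical (or general polyhedral) decomposition can be subdivided into a geometric ideal triangulation so that the verified Epstein--Penner machinery applies; you invoke it correctly.
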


Since the total number of manifolds exceeds one million, we could not include all of them in SnapPy \cite{SnapPy}. We thus distinguish between the {\em small} and the {\em large} census of hyperbolic Platonic tessellations, respectively, manifolds with only the latter one including those tessellations and manifolds marked with a star in Table~\ref{table:census344} and~\ref{table:census436}. The small census is part of a SnapPy installation, beginning with version 2.4. The small and large census are also both available at \cite{goerner:platonicCensusData}. Section~\ref{sec:results} gives details about the naming conventions and examples of how to access each census. Section~\ref{sec:AugKTG} shows how to access the link diagrams for AugKTGs. Section~\ref{section:tessTools} illustrates a tool to query a Platonic manifold about various properties.

\begin{table}
\caption{Cusped Tetrahedral Census $\{3,3,6\}$. Included from \cite{FGGTV:tetrahedralCensus} for completeness.}
\label{table:census336}
\bigskip
\begin{tabular}{r||r|r||r|r||r}
\multicolumn{1}{l||}{$\{3,3,6\}$}
& \multicolumn{2}{|c||}{cusped tetrahedral} & \multicolumn{2}{|c||}{cusped tetrahedral} 
& \multicolumn{1}{c}{homology} \\
& \multicolumn{2}{|c||}{tessellations} & \multicolumn{2}{|c||}{manifolds} 
& \multicolumn{1}{c}{links} \\
Tetrahedra & orientable & non-or. & orientable & non-or. & \\ \hline \hline
1 & 0 & 1 & 0 & 1 & 0 \\ \hline
2 & 2 & 2 & 2 & 1 & 1 \\ \hline
3 & 0 & 1 & 0 & 1 & 0 \\ \hline
4 & 4 & 4 & 4 & 2 & 2 \\ \hline
5 & 2 & 12 & 2 & 8 & 0 \\ \hline
6 & 7 & 14 & 7 & 10 & 0 \\ \hline
7 & 1 & 1 & 1 & 1 & 0 \\ \hline
8 & 14 & 10 & 13 & 6 & 5 \\ \hline
9 & 1 & 6 & 1 & 6 & 0 \\ \hline
10 & 57 & 286 & 47 & 197 & 12 \\ \hline
11 & 0 & 17 & 0 & 17 & 0 \\ \hline
12 & 50 & 117 & 47 & 80 & 7 \\ \hline
13 & 3 &8 & 3 & 8 & 0 \\ \hline
14 & 58 &134 & 58 & 113 & 25 \\ \hline
15 & 91 & 975 & 81 & 822 & 0 \\ \hline
16 & 102 & 175 & 96 & 142 & 32 \\ \hline
17 & 8 & 52 & 8 & 52 & 0 \\ \hline
18 & 213 & 1118 & 199 & 810 & 66 \\ \hline
19 & 25 & 326 & 25 & 326 & 0 \\ \hline
20 & 1886 & 26320 & 1684 & 22340 & 209 \\ \hline
21 & 31 & 251 & 31 & 251 & 0 \\ \hline
22 & 390 &- & 381 & - & 148 \\ \hline
23 & 58 &- & 58 & - & 0 \\ \hline
24 & 1544 & - & 1465 & - & 378 \\ \hline
25 & 7563 & - & 7367 & - & 0
\end{tabular}
\end{table}

\begin{table}
\caption{Cusped Octahedral Census $\{3,4,4\}$.}
\label{table:census344}
\bigskip
\begin{center}
\begin{tabular}{r||r|r||r|r||r|r}
\multicolumn{1}{l||}{$\{3,4,4\}$}& \multicolumn{2}{|c||}{cusped octahedral} & \multicolumn{2}{|c||}{cusped octahedral} 
& \multicolumn{1}{c|}{homology} & \multicolumn{1}{c}{Aug} \\
& \multicolumn{2}{|c||}{tessellations} & \multicolumn{2}{|c||}{manifolds} 
& \multicolumn{1}{c|}{links} & \multicolumn{1}{c}{KTG} \\
Octahedra & orientable & non-or. & orientable & non-or. & & \multicolumn{1}{c}{} \\ \hline \hline
1 & 2 & 11 & 2 & 6 & 2\\
2 & 27 & 117 & 21 & 62 & 9 & 4\\
3 & 29 & 324 & 24 & 208 & 11\\
4 & 446 & 4585 & 351 & 3076 & 83 & 24\\
5 & 353 & 19372 & 294 & 16278 & 119\\
6 & 8339 & \notCensus 250692 & 7524 & \notCensus 218397 & 849 & 210\\
7 & 3549 &   & 3056 & & 1029\\
8 & \notCensus 452445 &   & \notCensus440773 & & 12186 &  2821\\
\end{tabular}
\end{center}
\end{table}

\begin{table}
\caption{Cusped Cubical Census $\{4,3,6\}$.}
\label{table:census436}
\bigskip
\begin{center}
\begin{tabular}{r||r|r||r|r||r}
\multicolumn{1}{l||}{$\{4,3,6\}$} & \multicolumn{2}{|c||}{cusped cubical} & \multicolumn{2}{|c||}{cusped cubical} 
& \multicolumn{1}{c}{homology} \\
& \multicolumn{2}{|c||}{tessellations} & \multicolumn{2}{|c||}{manifolds} 
& \multicolumn{1}{c}{links} \\
Cubes & orientable & non-or. & orientable & non-or. & \\ \hline \hline
1 & 3 & 8 & 3 & 7 & 0\\
2 & 45 & 163 & 45 & 145 & 5\\
3 & 64 & 559 & 61 & 519 & 0 \\
4 & 704 & 9274 & 685 & 8795 & 29\\
5 & 778 & 31630 & 747 & 30948 & 0\\
6 & 9517 & \notCensus 529485 & 9267 & \notCensus 519385 & 239\\
7 & 23298 & & 22887 & & 0 \\
\end{tabular}
\end{center}
\end{table}

\begin{table}
\caption{Cusped Dodecahedral Census $\{5,3,6\}$.}
\label{table:census536}
\bigskip
\begin{center}
\begin{tabular}{r||r|r||r}
\multicolumn{1}{l||}{$\{5,3,6\}$} & \multicolumn{2}{|c||}{cusped dodecahedral} & \multicolumn{1}{c}{homology} \\
& \multicolumn{2}{|c||}{tessellations/manifolds}  
& \multicolumn{1}{c}{links} \\
Dodecahedra & orientable & non-or. & \\ \hline \hline
1 & 10 & 67 & 0 \\
2 & 915 & 4079 & 156\\
\end{tabular}
\end{center}
\end{table}

\begin{table}
\caption{Closed Icosahedral Census $\{3,5,3\}$ (dual tessellations counted only once).}
\label{table:census353}
\bigskip
\begin{center}
\begin{tabular}{r||r|r||r|r}
\multicolumn{1}{l||}{$\{3,5,3\}$} & \multicolumn{2}{|c||}{closed icosahedral} & \multicolumn{2}{|c}{closed icosahedral} 
 \\
& \multicolumn{2}{|c||}{tessellations} & \multicolumn{2}{|c}{manifolds} 
 \\
Icosahedra & orientable & non-orientable & orientable & non-orientable \\ \hline \hline
1 & 6 & 0 & 6 & 0 \\
2 & 5 & 1 & 5 & 1  \\
3 & 3 & 0 & 3 & 0 \\
4 & 15 & & 15 % Ran {3,5,3} for weeks and with 160Gb virtual memory
\end{tabular}
\end{center}
\end{table}

\begin{table}
\caption{Closed Cubical Census $\{4,3,5\}$.}
\label{table:census435}
\bigskip
\begin{center}
\begin{tabular}{r||r|r||r|r}
\multicolumn{1}{l||}{$\{4,3,5\}$} & \multicolumn{2}{|c||}{closed cubical} & \multicolumn{2}{|c}{closed cubical} 
 \\
& \multicolumn{2}{|c||}{tessellations} & \multicolumn{2}{|c}{manifolds} 
 \\
Cubes & orientable & non-orientable & orientable & non-orientable \\ \hline \hline
5 & 10 & 4 & 10 & 2  \\
10 & 68 & 150 & 59 & 91
\end{tabular}
\end{center}
\end{table}

\begin{table}
\caption{Closed Dodecahedral Census $\{5,3,5\}$ (dual tessellations counted only once).}
\label{table:census535}
\bigskip
\begin{center}
\begin{tabular}{r||r|r||r|r}
\multicolumn{1}{l||}{$\{5,3,5\}$} & \multicolumn{2}{|c||}{closed dodecahedral} & \multicolumn{2}{|c}{closed dodecahedral} 
 \\
& \multicolumn{2}{|c||}{tessellations} & \multicolumn{2}{|c}{manifolds} 
 \\
Dodecahedra & orientable & non-orientable & orientable & non-orientable\\ \hline \hline
1 & 9 & 0  & 8 & 0 \\
2 & 17 & 10 & 17 & 10 \\
3 & 52 & & 51  % computation for 3 non-orientable was at 135Gb memory
\end{tabular}
\end{center}
\end{table}

\begin{table}
\caption{3-dimensional model regular tessellations.\label{tbl:modelRegTess}}
\bigskip
\begin{tabular}{c||c|c|c|c}
 & Spherical & Euclidean & Hyperbolic & Hyperbolic \\
 Solid &  & & closed & cusped \\ \hline \hline
Tetrahedron & $\{3,3,3\}$, $\{3,3,4\}$, $\{3,3,5\}$ & & & $\{3,3,6\}$ \\ \hline
Octahedron & $\{3,4,3\}$ & & & $\{3,4,4\}$ \\ \hline
Cube & $\{4, 3, 3\}$ & $\{4,3,4\}$ & $\{4,3,5\}$ & $\{4,3,6\}$ \\ \hline
Icosahedron & & & $\{3,5,3\}$ \\ \hline
Dodecahedron & $\{5,3,3\}$ & & $\{5,3,4\}$, $\{5,3,5\}$ & $\{5,3,6\}$
\end{tabular}
\end{table}

\clearpage

\subsection{Methods}

We first enumerate the Platonic tessellations and then group them by isometry type to enumerate the Platonic manifolds. 

For the enumeration of Platonic tessellations, we generalize the algorithm introduced in \cite{FGGTV:tetrahedralCensus} in Section~\ref{sec:algo}. The new algorithm uses the barycentric subdivision of a Platonic tessellation and a variant of the isomorphism signature \cite{burton:encode, burton:encode2} specialized to triangulations arising from such subdivisions to save memory since it is often the limiting factor when running the algorithm. The new algorithm is also multithreaded.

To group the Platonic tessellations by isometry type, we use different invariants for the cusped and the closed case, see Section~\ref{sec:enumPlatMan}. For the cusped case, we can simply group the tessellations by their isometry signature (see \cite[Definition~3.4]{FGGTV:tetrahedralCensus}) since it is a complete invariant of a cusped hyperbolic manifold. Since the publication of \cite{FGGTV:tetrahedralCensus}, the author has generalized the algorithm and incorporated various features into SnapPy that now make it easy to compute verified isometry signatures for any cusped hyperbolic manifold when running SnapPy inside Sage, see Section~\ref{sec:isometrySig}. This work included porting Burton's isomorphism signature to SnapPy, exposing the canonical retriangulation to Python, implementing verified computation of shape intervals inspired by HIKMOT \cite{hikmot}, improving SnapPy's recognition of number fields, and generalizing the code from \cite{DHL} to compute cusp cross sections and tilts given shapes as intervals or exact expressions.

Unfortunately, we do not have an equivalent of the isometry signature for closed hyperbolic manifolds. Fortunately, the number of closed Platonic tessellations is fairly small and we can try various invariants to group the tessellations. We can then verify that the chosen invariant was strong enough to separate all the non-isometric Platonic manifolds by finding isometries between all manifolds in a group. As invariant, we picked the list of first homologies of covering spaces of a given manifold up to a certain degree, see Section~\ref{sec:closedInvariant}.

For the enumeration of AugKTGs in Section~\ref{sec:AugKTG}, we recursively perform the moves generating AugKTGs. Many sequences of moves result in the same planar projection of the same AugKTG making enumeration prohibitively expensive unless we have a method to detect whether a similar planar projection has already been enumerated and stop recursing. For this we develop an isomorphism signature of fat graphs and planar projections based on ideas similar to Burton's isomorphism signature for triangulations \cite{burton:encode, burton:encode2}.

\subsection{Relation to regular tessellation} \label{subsec:RegTess}

For completeness, this section reviews some well-known concepts. We mostly follow existing literature, but give the term ``regular tessellation'' a more general meaning (dropping the assumption that the underlying space is simply connected), define ``local regular tessellation'', and distinguish between a tessellation ``hiding symmetries'' (see Definition~\ref{def:hiddenSymTess}) and an orbifold ``admitting hidden symmetries''.

Recall the regular tessellations of $\BS^n, \BE^n,$ and $\BH^n$ by finite-volume regular polytopes with the defining property that their isometry group acts transitively on flags, see Table~\ref{tbl:modelRegTess}. We call these {\em model regular tessellations} and generalize the notion of regular tessellation as follows:

\begin{definition} \label{def:RegTess}
A {\em regular tessellation} is a tessellation of manifold $M$ into finite or ideal polytopes such that any (right-handed if $M$ orientable) flag can be taken to any other such flag through a combinatorial isomorphism.
\end{definition}

\begin{example}
2-dimensional regular tessellations are also called ``regular maps'' in the literature and were classified by Conder up to genus 101 \cite{conder:regmaps,conder:regmaps2}. 3-dimensional regular tessellations include the Poincare homology sphere, the Seifert-Weber space as well as the regular tessellation link complements in \cite{Goerner:RegTessLinkComps}.
\end{example}

Recall that each regular tessellation of dimension $n$ has an invariant called the {\em Schl\"afli symbol} defined inductively. For $n=1$, let $\{p_1\}$ denote the regular $p_1$-gon. The polytopes of a regular tessellation of dimension $n$ are all the same and are all regular in the sense that the faces of a polytope form a regular tessellation of dimension $n-1$, thus we obtain a Schl\"aflisymbol $\{p_1,\dots, p_{n-1}\}$ for them. Similarly, every $(n-2)$-cell of a regular tessellation of dimension $n$ has the same order $p_n$, which is the number of (not necessarily distinct) polytopes adjacent to it. By adding this number to the Schl\"aflisymbol for the polytopes, we obtain the Schl\"aflisymbol $\{p_1, \dots, p_n\}$ for a regular tessellation of dimension $n$.

The Schl\"aflisymbol for the vertex link of such a regular tessellation is $\{p_2,\dots, p_{n}\}$ and the dual regular tessellation has the Schl\"aflisymbol $\{p_1, \dots, p_n\}^* =\{p_n, \dots, p_1\}$.

\begin{example}
The cube has Schl\"afli symbol $\{4,3\}$ and the self-dual tessellation of $\BE^3$ by cubes has $\{4,3,4\}$.
\end{example}

\begin{definition}
A {\em locally regular tessellation} is a tessellation of a manifold $M$ such that its universal cover is a regular tessellation.
\end{definition}

Note that every regular tessellation is a locally regular tessellation. Also note that we can assign a locally regular tessellation the Schl\"afli symbol of its universal cover. In fact, at least in dimension 3, the locally regular tessellations are exactly those tessellations with a well-defined Schl\"afli symbol in the following sense: a tessellation is locally regular if and only if all polytopes are the same and are regular and each edge has the same order.

\begin{example}
The tessellation of the figure-eight knot complement by two regular ideal tetrahedra is locally regular and has Schl\"afli symbol $\{3,3,6\}$ and is thus finite-volume as defined as follows.
\end{example}

\begin{definition}
We call a locally regular tessellation {\em finite-volume} if its universal cover is combinatorially isomorphic to a model regular tessellation by finite-volume polytopes, or equivalently, has the same Schl\"afli symbol as one of the model regular tessellations by finite-volume polytopes. We call a 3-dimensional finite-volume locally regular tessellation a {\em Platonic tessellation}.
\end{definition}

\begin{example}
Tessellations with Schl\"afli symbol $\{3,3,7\}$ or $\{6,3,3\}$ are not finite-volume.
\end{example}

A finite-volume locally regular tessellation determines a geometric structure on the underlying manifold $M$ that is unique (up to scaling in the Euclidean case) when requiring that all polytopes are regular and isometric. For a finite-volume locally regular tessellation, each combinatorial isomorphism induces an isometry of the induced geometric structure. The converse is in general false and thus we introduce the following notion.

\begin{definition} \label{def:hiddenSymTess}
We say that a finite-volume locally regular tessellation {\em hides symmetries} if there is an isometry of the induced geometric structure not coming from a combinatorial isomorphism.
\end{definition}

\begin{remark} \label{remark:admitHidSym}
Note that some literature (e.g., \cite{Walsh}) uses the term ``admitting hidden symmetry'' to refer to a different notion that is applied to an orbifold $O=\BH^3/\Gamma$ instead of a tessellation and that is defined in terms of the normalizer and commensurator of $\Gamma$. We shall see that these two notions are closely related in Section~\ref{sec:hiddenSym}.
\end{remark}

Let $\Gamma_{\{p_1,\dots,p_n\}}$, respectively, $\Gamma^+_{\{p_1,\dots,p_n\}}$ denote the symmetry, respectively, orientation-preserving symmetry group of the model regular tessellation $\{p_1,\dots,p_n\}$.
By definition, every hyperbolic finite-volume locally regular tessellation is the quotient of a model regular tessellation by a torsion-free subgroup $\Gamma\subset\Gamma_{\{p_1,\dots,p_n\}}$. This is why we chose to call them model regular tessellations in analogy to model geometries. A tessellation is regular if and only if $\Gamma\triangleleft \Gamma_{\{p_1,\dots,p_n\}}$ or $\Gamma\triangleleft \Gamma^+_{\{p_1,\dots,p_n\}}$. A tessellation hides a symmetry if $N(\Gamma)\setminus \Gamma_{\{p_1,\dots,p_n\}}$ is non-empty.

\section{The enumeration of Platonic tessellations} \label{sec:algo}

The algorithm to enumerate Platonic tessellations is based on the earlier algorithm to enumerate hyperbolic tetrahedral manifolds \cite{FGGTV:tetrahedralCensus}.

\subsection{Barycentric subdivision and specialized isomorphism signature} \label{sec:barySubdiv} \label{sec:specialIsoSig}

To generalize the algorithm to Platonic tessellations, we work with their barycentric subdivision so that we have triangulations again. We label the vertices of each simplex in this triangulation such that 0 corresponds to a vertex (which might be ideal), 1 to an edge center, 2 to a face center, and 3 to a center of a Platonic solid (also see Figure~3 of \cite{Goerner:RegTessLinkComps}). Note that a face-pairing in the triangulation always pairs face $i$ with face $i$ such that vertex $j$ goes to vertex $j$. Thus, to specify the triangulation $t$, it is enough to give for each simplex with index $s$ and each face $i$ one index to another simplex. We denote this index by $(t)_{s,i}$ and let $(t)_{s,i}=-1$ when face $i$ of simplex $s$ is unglued. Since the additional gluing permutation that a SnapPy or Regina triangulation stores are not needed, we implement our own much simpler class to store triangulations. Our triangulation class is just an array of simplices where each simplex $s$ is a quadruple $((t)_{s,0}, (t)_{s,1}, (t)_{s,2}, (t)_{s,3})$. If we are interested in orientable manifolds only, we always put the simplices in the array in such a way that all simplices of the same handedness have indices of the same parity, i.e., any two neighboring simplices $s$ and $(t)_{s,i}$ have indices of opposite parity (if $(t)_{s,i}\not = -1$).

\begin{remark}
For the case of tetrahedral manifolds, using the barycentric subdivision instead of SnapPy or Regina triangulations, which encode the gluing permutations, is much slower. Hence, the algorithm described in \cite{FGGTV:tetrahedralCensus} is still relevant.
\end{remark}

A key ingredient in the algorithm described in \cite{FGGTV:tetrahedralCensus} was the usage of the isomorphism signature introduced by Burton in \cite{burton:encode, burton:encode2} to prune the search tree. Recall that the isomorphism signature was a complete invariant of the combinatorial isomorphism type of a triangulation, which can have unglued faces. Since the triangulations used here are fairly special, we can redefine the isomorphism signature to save computation time and, more importantly, memory.

For this, notice that our triangulations are completely determined by their edge-labeled dual 1-skeleton. It is a graph where an edge is labeled by $i$ when it corresponds to pairing face $i$ of one simplex with face $i$ of another simplex. In particular, the edges adjacent to a node have an induced ordering given by $i$. There are well-known deterministic algorithms such as depth-first and breadth-first search \cite{Cormen:Algorithms}, which traverse the nodes of such a graph in an order $n_0, n_1, \dots n_{k-1}$ that only depends on the choice of the start node $n_0$. However, for reasons that become apparent later, we use a different ordering of the nodes here that also only depends on the choice of the start node $n_0$ and is inductively defined as follows: Consider all edges that connect one node among the already ordered ones $n_0, n_1, \dots, n_{j-1}$ with a node different from $n_0, n_1 \dots n_{j-1}$. Among those edges, select only those with lowest label. Among those edges, pick the edge $e$ adjacent to $n_l$ such that $l$ is as low as possible. The next node in the ordering, $n_j$, will be the other node adjacent to $e$.

Given a triangulation $t$ and a choice of start simplex, this gives us a canonical way of (re-)indexing the simplices. If the triangulation $t$ has $k$ simplices, we have $k$ choices of a start simplex and thus obtain a set $S_t$ of $k$ triangulations that are combinatorially isomorphic to $t$. $S_t$ is invariant under combinatorial isomorphisms of $t$. Furthermore, $S_t$ can be ordered because a triangulation $t'\in S_t$ can be encoded by a $k$-tuple of quadruples $((t'_{0,0},t'_{0,1},t'_{0,2},t'_{0,3}),\dots, (t'_{k-1,0},t'_{k-1,1},t'_{k-1,2},t'_{k-1,3}))$ and tuples can be ordered lexicographically. Let $t_0=\min(S_t)$ be the triangulation that comes lexicographically first in $S_t$. Then $t_0$ is canonical in the sense that it is invariant under combinatorial isomorphisms of $t$.

Furthermore, the triangulation $t_0$ has the property that the $(t_0)_{s,0}, (t_0)_{s,1}, (t_0)_{s,2}$ are completely determined by the fixed Platonic solid we use. This is due to our choice of an ordering that exhausts all the simplices of one Platonic solid first before moving on to the next and that always traverses the barycentric subdivision of a Platonic solid in the same way (up to symmetry). Thus, $((t_0)_{0,3},(t_0)_{1,3},\dots,(t_0)_{k-1,3})$ is a complete invariant of the combinatorial isomorphism type of the triangulation $t$, which we call the {\em specialized isomorphism signature}. Since we only use it internally, we do not include a way of serializing this tuple of integers to an ASCII string as Burton does for the ordinary isomorphism signature.

We describe the algorithm to compute the specialized isomorphism signature together with some other basic helpers for triangulations in \algorithmref{algo:barycentricHelpers}.

\begin{algorithm}
\caption{Helpers for barycentric subdivisions of Platonic solids.} \label{algo:barycentricHelpers}

\SetKwProg{Proc}{Procedure}{}{end}%
\SetKwProg{Fn}{Function}{}{end}%
\SetKwFunction{AddPlatonicSolid}{AddPlatonicSolid}%
\SetKwFunction{GlueFaces}{GlueFaces}%
\SetKwFunction{SpecializedIsomorphismSignature}{SpecializedIsomorphismSignature}%
\Fn{\AddPlatonicSolid{Triangulation t, integer p, integer q}}{
\KwResult{Add the barycentric subdivision of the Platonic solid $\{p,q\}$ to \ArgSty{t}. Face 3 of each added simplex will be unglued. Return index of first added simplex.}
Add $\frac{8pq}{2p+2q-pq}$ new simplices to \ArgSty{t}.\\
\tcc{Do next step in such a way that any two new simplices that are neighboring have indices of opposite parity.}
Pair faces 0, 1, 2 of new simplices to form barycentric subdivision of Platonic solid.
\KwRet{index of first added simplex}}
\Fn{\GlueFaces{Triangulation t, integer simp0, integer simp1, integer p}}{
\KwResult{Pair face 3 of the simplices of \ArgSty{t} forming one face of a Platonic solid (with \ArgSty{p}-gons) with those forming another face of another (or possibly the same) Platonic solid such that the simplex \ArgSty{simp0} of \ArgSty{t} is glued to \ArgSty{simp1}. If the simplices \ArgSty{simp0} and \ArgSty{simp1} belong to the same face of the same Platonic solid of \ArgSty{t}, return false.}
\ArgSty{n} $\leftarrow$ 0\\
\While{$(t)_{simp0, 3}=-1$ and $(t)_{simp1, 3}=-1$}{
\If{\ArgSty{simp0} $=$ \ArgSty{simp1}}{
\tcc{Clearly, the two given simplices belong to the same face of the same Platonic solid.}
\KwRet{false}}
\tcc{Pair face 3 of simplex \ArgSty{simp0} and \ArgSty{simp1}.}
$(t)_{simp0,3}\leftarrow simp1, (t)_{simp1,3}\leftarrow simp0$\\
\tcc{For each of the two faces of the Platonic solids, switch to the next simplex of that face by going about the 23-edge.}
\ArgSty{simp0} $\leftarrow$ $(t)_{simp0,0}$ (if $\ArgSty{n}$ even) or $(t)_{simp0,1}$ (otherwise)\\
\ArgSty{simp1} $\leftarrow$ $(t)_{simp1,0}$ (if $\ArgSty{n}$ even) or $(t)_{simp1,1}$ (otherwise) \\
\ArgSty{n} $\leftarrow$ \ArgSty{n} $+1$
}
\tcc{If the two given simplices belonged to the same face of the same Platonic solid, the loop stops early.}
\KwRet{\ArgSty{n} $=$ 2\ArgSty{p}
}}
\Fn{\SpecializedIsomorphismSignature{Triangulation t}}{
\KwResult{The specialized isomorphism signature of \ArgSty{t}.}
For each simplex \ArgSty{s}, obtain a triangulation from \ArgSty{t} by swapping \ArgSty{s} with the first simplex and canonically reindexing all other simplices.\\
\ArgSty{t0} $\leftarrow$ the triangulation that comes lexicographically first among the above.\\
\tcc{Drop the gluing information for face 0, 1, 2}
\KwRet{$((t_0)_{0,3},(t_0)_{1,3},\dots,(t_0)_{k-1,3})$}
}

\end{algorithm}

\subsection{Algorithm}

\begin{algorithm}
\SetKwProg{Proc}{Procedure}{}{end}%
\SetKwProg{Fn}{Function}{}{end}%
\SetKwFunction{FixEdges}{FixEdges}%
\Fn{\FixEdges{Triangulation t, integer p, integer r}}{
\KwResult{\ArgSty{t} is modified in place. Returns ``valid'' or ``invalid''.}
\While{a simplex has an open 01-edge \ArgSty{e} of order 2\ArgSty{r}}{
Let \ArgSty{simp0} and \ArgSty{simp1} be the two simplices adjacent to \ArgSty{e} with unglued face 3.

\If{\FuncSty{GlueFaces(}\ArgSty{t, simp0, simp1, p}\FuncSty{)}}{
\KwRet{``invalid''}
}
}
\KwRet{``valid'' if for each simplex
\begin{itemize}
\item the vertex link of vertex 1 is not a projective plane
\item the order of the 01-edge is $<2r$ (if open) or $=2r$ (if closed)
\end{itemize}
}
}
\caption{Method to check a triangulation.} \label{algo:helper}
\end{algorithm}

\begin{algorithm}
\SetKwProg{Proc}{Procedure}{}{end}%
\SetKwProg{Fn}{Function}{}{end}%
\SetKwFunction{FindAllPlatonicTessellations}{FindAllPlatonicTessellations}%
\SetKwFunction{RecursiveFind}{RecursiveFind}%
\Fn{\FindAllPlatonicTessellations{bool orientable, integer p, integer q, integer r, integer max}}{
\KwResult{Barycentric subdivisions of all (non-)orientable Platonic tessellations $\{p,q,r\}$ up to 
combinatorial isomorphism with at most \ArgSty{max} solids.}
~

result $\leftarrow \{\}$ \tcc*{resulting triangulations}
already\_seen $\leftarrow \{\}$ \tcc*{isomorphism signatures encountered 
earlier}
~

\Proc{\RecursiveFind{Triangulation $t$}}{
\KwResult{Searches all triangulations obtained from $t$ by gluing faces 
or adding Platonic solids.}
~

\tcc{Close 01-edges of order 2r, reject unsuitable triangulations}
\If{ \FuncSty{FixEdges(}$t, r$\FuncSty{)} = ``valid'' }{
\tcc{Skip triangulations already seen earlier}
isoSig $\leftarrow$ \FuncSty{SpecializedIsomorphismSignature(}\ArgSty{t}\FuncSty{)};

\If{ isoSig $\not\in$ already\_seen}{
already\_seen $\leftarrow$ already\_seen $\cup$ \{isoSig\};

\If{ t has no open faces }{
result $\leftarrow$ result $\cup ~\{t\}$\;
}
\Else{
\tcc{This choice results in faster enumeration}
Among all simplices of \ArgSty{t} with unglued face 3, pick one with odd index \ArgSty{simp0} whose edge 01 has order as high as possible.
~

\If{t has less than max $\cdot \frac{8pq}{2p+2q-pq}$ simplices}{
\ArgSty{t1} $\leftarrow$ copy of \ArgSty{t};

\ArgSty{simp1} $\leftarrow$ \FuncSty{AddPlatonicSolid(}\ArgSty{t1, p, q}\FuncSty{)};

\FuncSty{GlueFaces(}\ArgSty{t1, simp0, simp1, p}\FuncSty{);}

\FuncSty{RecurvsiveFind(}\ArgSty{t1}\FuncSty{)};

}

\For{each simplex with index simp1 of t}{
\If{simp1 is even or orientable = false}{
\ArgSty{t1} $\leftarrow$ copy of \ArgSty{t};

\If{\FuncSty{GlueFaces(}\ArgSty{t1, simp0, simp1, p}\FuncSty{)}}{

\FuncSty{RecursiveFind(}\ArgSty{t1}\FuncSty{)};

}
}
}
}
}
}
}
\ArgSty{t} $\leftarrow$ empty triangulation;

\FuncSty{AddPlatonicSolid(}\ArgSty{t, p, q}\FuncSty{)};

\FuncSty{RecursiveFind(}\ArgSty{t}\FuncSty{)};

\If{orientable = false}{
\KwRet{non-orientable triangulations in result}
}
\Else{
\KwRet{result}
}
}
\caption{The main function to enumerate Platonic tessellations.} \label{algo:main}
\end{algorithm}

The algorithm (see \algorithmref{algo:main}) starts with a single Platonic solid and works recursively, at each level picking one open face of a Platonic solid and trying to glue it to any other open face in any configuration or to a new Platonic solid if the given maximal number of solids has not been reached yet. During this search, the same (up to combinatorial isomorphism) complex of Platonic solids will be encountered many times and to avoid duplicate work, we use the specialized isomorphism signature described above.

The algorithm calls into the helper function shown in \algorithmref{algo:helper} to stop recursing if the triangulation does not have the combinatorics suitable to be the barycentric subdivision of a Platonic tessellation of the desired type $\{p,q,r\}$. Note that this function also closes up open edges between vertex 0 and 1 which have the right order (number of adjacent simplices). The recursive search would have closed up that edge by gluing the two open adjacent faces eventually, but doing it in the helper function speeds up the search significantly.

\begin{remark}
Together, the three methods \texttt{AddPlatonicSolid}, \texttt{GlueFaces}, and \texttt{FixEdges} ensure that every edge of every simplex has the right order for tessellations of type $\{p,q,r\}$. \texttt{AddPlatonicSolid}, \texttt{GlueFaces}, and \texttt{FixEdges} also ensure that the link of vertex 3, vertex 2, respectively, vertex 1 is a sphere.\\
Note that the algorithm does not check the vertex link of vertex 0. This is only a problem for the non-orientable closed case where a finite vertex with a projective plane as link would result in non-manifold topology. We can use Regina \cite{regina} to find the ones with non-manifold topology and sort them out later. It turns out that the algorithm only produced non-manifold topology in the closed cubical case.
\end{remark}

\subsection{Multithreading}
This recursive algorithm lacks inherent parallelism, i.e., offers no natural decomposition into elements that can be run concurrently.
Abstractly, the algorithm can be thought of as a search algorithm on the following directed acyclic graph. A node corresponds to an equivalence class $[t]$ of combinatorially isomorphic triangulations. For each node, chose one particular triangulation $t$ from the corresponding equivalence class and add an edge from $[t]$ to $[t1]$ for each $t1$ that was constructed in the \textbf{else} block of the \texttt{RecursiveFind} procedure and successfully processed by \texttt{FixEdges} in \algorithmref{algo:main}.
Starting with the node corresponding to the barycentric subdivision of a single Platonic solid, the algorithm will search all triangulations up to a certain number of simplices and return those ones that have no unglued faces.

Our first parallelization attempts suffered from the problem that all threads but one died quickly leaving almost all the work to the one remaining thread. This is because the above directed acyclic graph is densely connected so threads race for the same nodes.

We eventually decided on a thread pool pattern with a task queue where a task consisted of calling \texttt{RecursiveFind} for some triangulation $t$ and where a task itself could add tasks to the queue. To implement this for \algorithmref{algo:main}, replace the lines ``\texttt{RecursiveFind(}\textit{t1}\texttt{)}'' with code that adds \textit{t1} to the task queue instead. This is not performing well yet, and we added another optimization: we replaced the lines ``\texttt{RecursiveFind(}\textit{t1}\texttt{)}'' instead with code that adds \textit{t1} to the task queue if there are idle threads and otherwise continues to call \texttt{RecursiveFind(}\textit{t1}\texttt{)}. 

\textit{result} and \textit{already\_seen} will be shared among the threads and must be guarded by mutexes. In particular, the test \textit{isoSig}$\not\in$\textit{already\_seen} and the following instruction of adding \textit{isoSig} to \textit{already\_seen} need to be one atomic operation.

\subsection{Implementation}

We implemented \algorithmref{algo:main} in C++.  We used the boost library \cite{boost} to implement multithreading. The multithreaded implementation was successful and resulted in about a 10 times speed-up compared to the single-threaded implementation on a 12 core Xeon E5-2630. We also used the Regina library, though only to convert our triangulation objects into isomorphism signatures that can be understood by Regina or SnapPy.

\section{The enumeration of Platonic manifolds} \label{sec:enumPlatMan}

\subsection{Isometry signature for cusped manifolds} \label{sec:isometrySig}

The census of Platonic manifolds is obtained from the census of Platonic tessellations by grouping the tessellations by isometry type. We do this by grouping them by their isometry signature.

Recall that the isometry signature introduced in \cite{FGGTV:tetrahedralCensus} is a complete invariant of a cusped hyperbolic 3-manifold based on the canonical cell decomposition introduced by Epstein and Penner \cite{EP} (also see \cite[Definition~3.1]{FGGTV:tetrahedralCensus}). If the canonical cell decomposition contains non-tetrahedral cells, there is a canonical way of turning it into a triangulation called the {\em canonical retriangulation} (see \cite[Definition~3.3]{FGGTV:tetrahedralCensus}). Thus, we always obtain a triangulation that is canonical and we can compute its isomorphism signature, which was defined by Burton \cite{burton:encode, burton:encode2}. We call the result the {\em isometry signature}. The canonical retriangulation and isometry signature can be computed in SnapPy, version 2.3.2 or later, as follows:
\begin{verbatim}
>>> M=Manifold("m137")
>>> M.isometry_signature()
'sLLvwzvQPAQPQccghmiljkpmqnoorqrrqfafaoaqoofaoooqqaf'
>>> T = M.canonical_triangulation()
\end{verbatim}

For the above computations, the SnapPea kernel of SnapPy uses numerical methods, which are not verified and could potentially wrong results. If SnapPy is used inside Sage, we can give \texttt{verified=True} as extra argument to use methods that instead are proven to give either the correct result or no result:
\begin{verbatim}
>>> M=Manifold("m137")
>>> M.isometry_signature(verified = True)
'sLLvwzvQPAQPQccghmiljkpmqnoorqrrqfafaoaqoofaoooqqaf'
>>> T = M.canonical_triangulation(verified = True)
>>> len(T.isomorphisms_to(T)) # The verified size of the symmetry group of M
2
\end{verbatim}

Verifying the canonical cell decomposition when all cells are tetrahedral was already described in Dunfield, Hoffman, Licata~\cite{DHL} using HIKMOT \cite{hikmot}. In \cite{FGGTV:tetrahedralCensus}, we described how to verify a canonical cell decomposition that might have non-tetrahedral cells for cusped arithmetic manifolds with known trace field. This, however, does not cover the cusped dodecahedral manifolds, which are non-arithmetic.

The author has generalized the algorithm for verified canonical cell to any cusped hyperbolic manifold and contributed it to SnapPy. The implementation first tries to use interval arithmetic methods to verify the canonical cell decomposition. Interval arithmetic methods can prove inequalities but cannot prove equalities, thus they can only verify canonical cell decompositions with tetrahedral cells. Therefore, if the verification with interval arithmetic failed, the algorithm tries exact methods next.

We refer the reader to the ``Verified computations'' section of the SnapPy documentation \cite{SnapPy} for more examples and plan a future publication to explain the underlying math in depth.

\subsection{Invariant for closed Platonic manifolds} \label{sec:closedInvariant}

We regard two covering spaces $\tilde{M}\to M$ and $\tilde{M'}\to M$ equivalent if there is an isomorphism $\tilde{M}\to\tilde{M'}$ commuting with the covering maps. Given a manifold $M$ and a natural number $n>0$, let $C_n(M)$ be the multiset of pairs $(\mathrm{type}(\tilde{M}\to M), H_1(\tilde{M}))$ where $\tilde{M}\to M$ is a connected covering space of degree $n$ and where $\mathrm{type}(\tilde{M}\to M)$ takes the values ``cyclic'', ``regular'', and ``irregular'' based on the covering type. $C_n(M)$ can be computed with SnapPy and is an invariant of $M$.

For example, SnapPy's census database uses a manifold hash for faster lookups that is computed from $C^\mathrm{SnapPy}(M)=(H_1(M), C_2(M), C_3(M))$. We used $C^\mathrm{SnapPy}(M)$ to start separating the Platonic tessellations. However, we were left with cases where this invariant could not tell apart several manifolds for which SnapPy could not find an isomorphism between them either. For these cases, we used $C_n(M)$ or $C^\mathrm{cyclic}_n$ (where we consider only cyclic covers of degree $n$) with higher $n$ to resolve the situation.

It is prohibitively expensive to compute these higher $C_n(M)$ or $C^\mathrm{cyclic}_n(M)$ for all closed Platonic tessellations in the census. Yet, for simplicity, we want to define a single invariant strong enough to separate all closed Platonic manifolds in the census. We thus came up with the following expression, which is rather engineered for this purpose than canonical, but still an invariant:
$$C^\mathrm{proprietary}(M) = \begin{cases}
(C^\mathrm{SnapPy}(M),C^\mathrm{cyclic}_5(M)) & \mbox{if } C^\mathrm{SnapPy}(M) = (\{(\mathrm{cyclic},(\BZ/5)^3)\},\{\},\{\}) \\
(C^\mathrm{SnapPy}(M),C_6(M)) & \mbox{if } C^\mathrm{SnapPy}(M) = (\{(\mathrm{cyclic},\BZ/29)\},\{\},\{\}) \\
\qquad \vdots & \mbox{five more special cases}\\
C^\mathrm{SnapPy}(M) & \mbox{otherwise}
 \end{cases}$$

\section{The census} \label{sec:results}

We ran the multithreaded algorithm in Section~\ref{sec:algo} to create the census of hyperbolic Platonic tessellations and manifolds on a 12 core Xeon E5-2630 with 128 Gb memory. For each case, we picked the highest number of Platonic solids so that the algorithm would still finish within a couple of days and without running out of memory. We then grouped the tessellations by isometry type using the invariants described in Section~\ref{sec:enumPlatMan} and converted the result to a SnapPy census or Regina \cite{regina} file. Since we obtained over a million tessellations, computing the verified isometry signatures in the last step also took several days of computation time\footnote{Even more time was needed to compute $C^\mathrm{SnapPy}(M)$ which SnapPy hashes for faster look-up.}.

The results are shown in Theorem~\ref{thm:Main} and Table~\ref{table:census336}, \ref{table:census344}, \ref{table:census436}, \ref{table:census536}, \ref{table:census353}, \ref{table:census435}, and \ref{table:census535}.

\subsection{Naming}

We give hyperbolic Platonic manifolds names as follows\footnote{This differs slightly from the names introduced in \cite{FGGTV:tetrahedralCensus} in that we add one more leading zero for consistency across all manifolds of the small census.}:
$$ \underbrace{\texttt{o}}_{\substack{\text{orientability:}\\ "\texttt{o}"\\ "\texttt{n}"}} \underbrace{\texttt{dode}}_{\substack{\text{solid:}\\ "\texttt{tet}" \\ "\texttt{cube}" \\ "\texttt{oct}" \\ "\texttt{dode}" \\ "\texttt{ico}"}} \underbrace{\texttt{cld}}_{\substack{\text{closed:}\\ "\texttt{cld}"\\ \text{cusped:} \\ ""}} \underbrace{\texttt{03}}_{\substack{\text{number}\\ \text{of} \\ \text{solids}}} \texttt{\_} \underbrace{\texttt{00027}}_{\text{Index}}.$$
The different Platonic tessellations corresponding to the same manifold are named with an additional index, e.g., \texttt{ododecld03\_00027\#0}, \texttt{ododecld03\_00027\#1}. The indices are chosen deterministically using the lexicographic order on the isomorphism signature of a tetrahedral Platonic tessellation, respectively, the barycentric subdivision of a non-tetrahedral Platonic tessellation, similarly to \cite[Section 4.1]{FGGTV:tetrahedralCensus}.

\subsection{SnapPy census} \label{sec:SnapPyCensus}

The small census of hyperbolic Platonic manifolds is already available in a SnapPy installation, beginning with version 2.4. It can be used just like any other census in SnapPy, for example:
\begin{verbatim}
>>> M = Manifold("odode01_00001") # only works for small census
>>> M = DodecahedralOrientableCuspedCensus["odode01_00001"]
>>> len(OctahedralOrientableCuspedCensus(solids=3)) # Number mfds with 3 octs
24
>>> M = Manifold("x101")
>>> CubicalNonorientableCuspedCensus.identify(M)
ncube01_00004(0,0)
\end{verbatim}

The large census of hyperbolic Platonic manifolds needs to be obtained from \cite{goerner:platonicCensusData} first and imported into SnapPy (using ``\texttt{from platonicCensus import *}'' in the \texttt{snappy} directory which contains \texttt{platonicCensus.py}, also see \texttt{README.txt}) before it can be used just as the examples above except for the first line.

\begin{remark}
Similar to the SnapPy \texttt{OrientableClosedCensus}, closed manifolds in the Platonic census are given as Dehn-fillings on a 1-cusped manifold. SnapPy can automatically convert a triangulation with finite vertices into this form. However, we sometimes had to modify the triangulation to ensure that SnapPy can find a geometric solution to the gluing equations.
\end{remark}

\subsection{Regina files}

We provide the census of hyperbolic Platonic tessellations as Regina files at \cite{goerner:platonicCensusData}. Each Regina file contains the cusped or closed tessellations for one Platonic solid and is structured into a three-level hierarchy as follows:
\begin{itemize}
\item Container nodes, each for a different number of solids.
\item Container nodes, each for one hyperbolic Platonic manifold, i.e., it groups all tessellations that are isometric as manifolds and is named after the manifold.
\item Triangulation nodes, each containing
\begin{itemize}
\item a Platonic tessellation (in tetrahedral case) or its barycentric subdivision\footnote{The triangulation in the regina file is combinatorially isomorphic to the barycentric subdivision, but the vertices might not be indexed as in Section~\ref{sec:barySubdiv}. This is because the isomorphism signature was used in the intermediate steps. The method \texttt{conform\_vertex\_order} in \texttt{tools/conform.py} can be used to reorder the vertices to follow the convention again.} or
\item the canonical retriangulation of the corresponding manifold (in cusped case)
\end{itemize}
\end{itemize}

\section{Properties of Platonic tessellations and manifolds}

\label{sec:propPlat}

In this section, we discuss and give some properties of the tessellations and manifolds in the Platonic census.

\subsection{Closed tessellations and the Seifert-Weber space} \label{sec:seifertWeber}

\begin{table}
\caption{Orientable closed Platonic tessellations with one dodecahedron. Every tessellation in the list is chiral.\label{tbl:closedWithOneDode}}

\bigskip

\begin{tabular}{l|c|c|c}
\multicolumn{1}{c|}{Tessellation} & self-dual &regular & $H_1$ \\ \hline \hline
\texttt{ododecld01\_00000} & Yes & No  & $\BZ/35$ \\ \hline
\texttt{ododecld01\_00001} & No & No  & $\BZ/48$ \\ \hline
\texttt{ododecld01\_00002} & Yes & No  & $\BZ/29$ \\ \hline
\texttt{ododecld01\_00003} & Yes & No  & $(\BZ/15)^2$ \\ \hline
\texttt{ododecld01\_00004} & No & No  & $(\BZ/5)^3$ \\ \hline
\texttt{ododecld01\_00005} & No & No  & $(\BZ/3)^2$ \\ \hline
\texttt{ododecld01\_00006} & Yes & No  & $\BZ/5\oplus\BZ/15$ \\ \hline
\texttt{ododecld01\_00007\#0} & Yes & Yes  & $(\BZ/5)^3$ \\
\texttt{ododecld01\_00007\#1} & Yes & No  & $(\BZ/5)^3$
\end{tabular}
\end{table}

The model regular tessellations $\{3,5,3\}$ and $\{5,3,5\}$ are self-dual. This means that the dual $T^*$ of a Platonic tessellation $T$ of type $\{3,5,3\}$ or $\{5,3,5\}$ is of the same type but might or might not be combinatorially isomorphic to $T$. If $T^*$ and $T$ are combinatorially isomorphic, we say that $T$ is self-dual.

Table~\ref{tbl:closedWithOneDode} lists all closed tessellations with one dodecahedron. Note that \texttt{ododecld01\_00004} and \texttt{ododecld01\_00007} form the only pair of manifolds in the table which cannot be distinguished by their first homology groups (they can be distinguished by the homology groups of their 5 fold covers). 

\texttt{ododecld01\_00007} is actually the Seifert-Weber space \cite{seifertWeber:ref}. Recall that the Seifert-Weber space is the hyperbolic manifold obtained by taking a regular hyperbolic dodecahedron $P$ with dihedral angle $2\pi/5$ and gluing opposite face by a $3\pi/5$ rotation. Consider the Dirichlet domains obtained by picking as base point the center of $P$ itself or a face center, edge center, or vertex of $P$. Each of these Dirichlet domains turns out to be a regular dodecahedron again with the same dihedral angle. Thus each of these choices gives a Platonic tessellation of the Seifert-Weber space. Picking the center of $P$ as base point gives the tessellation $T$ of the Seifert-Weber space by $P$ itself. The dual of $T^*$ is obtained when picking any vertex of $P$ as base point. $T$ and $T^*$ are combinatorially isomorphic (and denoted by \texttt{ododecld01\_00007\#0}). In fact, there are orientation-reversing isometries of the Seifert-Weber space that take $T$ to $T^*$ (namely, any isometry corresponding to the reflection about the bisecting plane of the center of $P$ and a vertex of $P$). Note that while the Seifert-Weber space is amphichiral, $T$ and $T^*$ are actual chiral as tessellations (i.e., they have no orientation-reversing combinatorial automorphism). $T$ and $T^*$ are actually hiding the orientation-reversing symmetries of the Seifert-Weber space. The symmetries of $T$ or $T^*$ (which are both regular tessellations) together with any orientation-reversing symmetry generate the full symmetry group of the Seifert-Weber space, which is $S_5$. In other words, all symmetries of the Seifert-Weber space occur as symmetries of the triangulation obtained from $T$ or $T^*$ by barycentric subdivision.\\
All Platonic tessellations obtained from Dirichlet domains with base point being a face or edge center are combinatorially isomorphic (and denoted by \texttt{ododecld01\_00007\#1}).

For the remaining type $\{4,3,5\}$ of closed Platonic tessellations, we have the following lemma.

\begin{lemma}
The number of cubes of a closed cubical tessellation $\{4,3,5\}$ is a multiple of 5.
\end{lemma}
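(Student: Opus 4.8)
The plan is to exploit the local structure of a $\{4,3,5\}$ tessellation around an edge and translate it into a mod-$5$ counting argument, much as one would for the analogous fact that a $\{4,3,4\}$ Euclidean cubical tessellation has no constraint but a $\{3,3,6\}$ one forces an even count around each edge. Here the relevant datum is the entry $r=5$ in the Schläfli symbol $\{4,3,5\}$: by the characterization of locally regular tessellations recalled before Definition~\ref{def:hiddenSymTess}, every edge of the tessellation has exactly $5$ cubes around it, and every cube is a combinatorially regular cube with $12$ edges. First I would set up the incidence count between cubes and edges: if $N$ is the number of cubes and $E$ the number of edges, then counting (cube, edge) incidences two ways gives $12N = 5E$, since each cube contributes its $12$ edges and each edge is met by exactly $5$ cubes. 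Hence $5 \mid 12N$, and since $\gcd(5,12)=1$ this already forces $5 \mid N$.

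The only thing that needs care is the claim ``each edge of the tessellation is met by exactly $5$ cubes.'' This is exactly the condition that the tessellation is locally regular with Schläfli symbol $\{4,3,5\}$: the last entry $p_n = 5$ is, by the inductive definition of the Schläfli symbol recalled in Section~\ref{subsec:RegTess}, the common order of every $(n-2)$-cell, i.e.\ every edge in dimension $3$, namely the number of (not necessarily distinct) polytopes adjacent to it. Since the closed cubical census consists precisely of Platonic tessellations of type $\{4,3,5\}$ — finite-volume locally regular tessellations with that symbol — this holds for every tessellation counted in Table~\ref{table:census435}. One should note the parenthetical ``not necessarily distinct'': if the same cube were glued to an edge more than once, the incidence count $12N = 5E$ still holds provided we count incidences with multiplicity, which is exactly what the double-count does, so the argument is unaffected.

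I expect no real obstacle here; the main (minor) subtlety is simply to phrase the double counting so that it is valid even in the presence of non-generic identifications (a cube adjacent to an edge with multiplicity, or two faces of the same cube being glued), which is handled by counting flags or (cube, edge)-incidences with multiplicity rather than treating the $1$-skeleton naively. An alternative, essentially equivalent, route would be to pass to the barycentric subdivision used throughout Section~\ref{sec:barySubdiv}: each cube subdivides into $48$ simplices, the subdivision of $\{4,3,5\}$ has its vertex-$1$ edges (the edge centers) of order $2r = 10$, and chasing the count of simplices incident to vertices of type $0$ versus type $1$ recovers $12N = 5E$; but the direct incidence count above is cleaner and I would present that.
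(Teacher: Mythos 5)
Your proof is correct, but it takes a genuinely different route from the paper's. You double-count $(\text{cube},\text{edge-of-cube})$ incidences: $12N=5E$, and since $\gcd(12,5)=1$ this forces $5\mid N$; your attention to the parenthetical ``not necessarily distinct'' in the definition of the Schl\"afli symbol is exactly what makes the count $5E$ valid with multiplicity, and in a manifold (no orbifold locus) no edge is folded onto itself, so the count of five wedges around each edge is clean. The paper instead argues group-theoretically: the tessellation corresponds to a torsion-free finite-index subgroup $\Gamma\subset\Gamma_{\{4,3,5\}}$; since $\Gamma_{\{4,3,5\}}$ contains elements of order $5$ and $\Gamma$ is torsion-free, the order-$5$ subgroup acts freely on the cosets, so $5$ divides the index $[\Gamma_{\{4,3,5\}}:\Gamma]=48N$ (each cube containing $48$ fundamental domains), and $\gcd(48,5)=1$ gives $5\mid N$. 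The two arguments are close cousins --- your relation $12N=5E$ is essentially the orbit count for the order-$5$ edge rotations, localized at the edges --- but yours is more elementary and self-contained, needing only the combinatorial definition of a $\{4,3,5\}$ tessellation, whereas the paper's is shorter given the identification of Platonic tessellations with torsion-free subgroups already set up in Section~\ref{subsec:RegTess} and makes transparent why $5$ is the only torsion order that yields a constraint (the orders $2$ and $3$ also occur in $\Gamma_{\{4,3,5\}}$ but divide $48$ and so give no information).
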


\begin{proof}
Such a tessellation corresponds to a torsion-free subgroup $\Gamma$ of $\Gamma_{\{4,3,5\}}$, which has torsion elements of order $5$. Thus, the index of $\Gamma$ in $\Gamma_{\{4,3,5\}}$ must be a multiple of $5$. However, the number of fundamental domains of $\Gamma_{\{4,3,5\}}$ in a cube is 48 and thus coprime to 5.
\end{proof}

\subsection{Hidden symmetries and isometric tessellations} \label{sec:hiddenSym}

The only non-arithmetic symmetry group among the hyperbolic tessellations in Table~\ref{tbl:modelRegTess} is $\Gamma_{\{5,3,6\}}$ (see \cite[Section 13.1, 13.2]{MR}). As explained in \cite{neumannReid:Topology90Arithmetic}, Margulis Theorem (see, e.g., \cite[Theorem~10.3.5]{MR}) thus implies that the commensurability class of $\Gamma_{\{5,3,6\}}$ has a maximal element, namely the commensurator of $\Gamma_{\{5,3,6\}}$ given by (also see \cite{Walsh})
$$
\Comm(\Gamma)=\left\{g\in \Isom(\BH^3) ~\big|~ \left[\Gamma:\Gamma\cap g\Gamma g^{-1}\right]<\infty,~ \left[g\Gamma g^{-1}:\Gamma\cap g\Gamma g^{-1}\right]<\infty \right\},
$$
i.e., the subgroup of those elements $g\in \Isom(\BH^3)$ such that $\Gamma$ and $g\Gamma g^{-1}$ are commensurable.

This maximal element is actually $\Gamma_{\{5,3,6\}}$ itself, which is also equal to its own normalizer. In other words, $\Gamma_{\{5,3,6\}}$ admits neither symmetries nor hidden symmetries. This fact implies that the related tessellations cannot hide symmetries:

\begin{lemma}
Every manifold commensurable with the orbifold $\BH^3/\Gamma_{\{5,3,6\}}$ is a covering space of the orbifold and thus a cusped dodecahedral manifold. Every cusped dodecahedral manifold $M$ has a unique Platonic tessellation. Furthermore, no cusped dodecahedral tessellation hides symmetries {\rm (}in the sense of Definition~\ref{def:hiddenSymTess}{\rm\/).}
\end{lemma}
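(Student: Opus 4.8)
The plan is to reduce all three assertions to the single fact recalled just above, namely that the commensurator of $\Gamma := \Gamma_{\{5,3,6\}}$ in $\Isom(\BH^3)$ equals $\Gamma$ itself (so that $\Gamma$ is also its own normalizer). Throughout I would encode a cusped dodecahedral tessellation by a torsion-free finite-index subgroup $\Lambda \le \Gamma$, unique up to conjugacy in $\Gamma$, with the tessellation being the image in $\BH^3/\Lambda$ of the $\{5,3,6\}$ tessellation of $\BH^3$, and I would use Mostow--Prasad rigidity to pass between ``isometry of a hyperbolic manifold or orbifold'' and ``conjugation of the associated lattice in $\Isom(\BH^3)$''. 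Under this dictionary the combinatorial automorphism group of such a tessellation is $N_\Gamma(\Lambda)/\Lambda$, while $\Isom(\BH^3/\Lambda) \cong N_{\Isom(\BH^3)}(\Lambda)/\Lambda$.

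For the first assertion, I would start from a manifold $N = \BH^3/\Lambda$ commensurable with $\BH^3/\Gamma$; after replacing $\Lambda$ by a conjugate one may assume $\Lambda \cap \Gamma$ has finite index in both $\Lambda$ and $\Gamma$. Since the commensurator depends only on the commensurability class, $\Lambda \subseteq \Comm(\Lambda) = \Comm(\Lambda \cap \Gamma) = \Comm(\Gamma) = \Gamma$, so $\BH^3/\Lambda \to \BH^3/\Gamma$ is an orbifold covering; as $\Lambda$ is torsion-free this makes $N$ an honest manifold cover of the orbifold, and pulling back $\{5,3,6\}$ realizes $N$ as a cusped dodecahedral manifold (undoing the conjugation only post-composes the covering with an isometry).

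The second and third assertions run on the same mechanism. If $g \in \Isom(\BH^3)$ normalizes a torsion-free finite-index $\Lambda \le \Gamma$, then $\Lambda = g\Lambda g^{-1}$ has finite index in both $\Gamma$ and $g\Gamma g^{-1}$, so these groups are commensurable and $g \in \Comm(\Gamma) = \Gamma$; hence $N_{\Isom(\BH^3)}(\Lambda) \subseteq \Gamma$, i.e.\ $N(\Lambda) \setminus \Gamma_{\{5,3,6\}} = \emptyset$, which by the criterion at the end of Section~\ref{subsec:RegTess}, together with the identification of automorphism groups above, says exactly that the tessellation of $N$ coming from $\Lambda$ hides no symmetry. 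For uniqueness I would take two cusped dodecahedral tessellations $T_1, T_2$ of the same $N$, coming from torsion-free finite-index $\Lambda_1, \Lambda_2 \le \Gamma$; by rigidity there is $g \in \Isom(\BH^3)$ with $g\Lambda_1 g^{-1} = \Lambda_2$, whence $\Gamma$ and $g^{-1}\Gamma g$ are commensurable (via $\Lambda_1$), so $g \in \Comm(\Gamma) = \Gamma$, and conjugation by this element of $\Gamma$ is a combinatorial automorphism of the $\{5,3,6\}$ tessellation of $\BH^3$ carrying $\Lambda_1$ to $\Lambda_2$, hence descends to a combinatorial isomorphism $T_1 \to T_2$.

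I do not expect a genuine obstacle internal to the lemma: the substantive ingredient is the cited commensurator computation $\Comm(\Gamma_{\{5,3,6\}}) = \Gamma_{\{5,3,6\}}$, which rests on non-arithmeticity of $\Gamma_{\{5,3,6\}}$ and Margulis' theorem, from \cite{MR, Walsh}. The care needed is purely bookkeeping: working consistently with the wide notion of commensurability (one ambient conjugation allowed), and making the dictionary ``combinatorial automorphisms of the Platonic tessellation $= N_\Gamma(\Lambda)/\Lambda$'' versus ``isometries of $N \cong N_{\Isom(\BH^3)}(\Lambda)/\Lambda$'' precise (via Mostow rigidity), so that the set-theoretic inclusion $N(\Lambda) \subseteq \Gamma_{\{5,3,6\}}$ indeed encodes both ``no hidden symmetries'' and ``unique Platonic tessellation''.
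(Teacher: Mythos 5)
Your proposal follows essentially the same route as the paper: everything is funneled through the single fact $\Comm(\Gamma_{\{5,3,6\}})=\Gamma_{\{5,3,6\}}$, and your arguments for the first assertion (maximality of the commensurator forces $\Lambda\subseteq\Gamma_{\{5,3,6\}}$), for the absence of hidden symmetries ($N(\Lambda)\subseteq\Comm(\Gamma_{\{5,3,6\}})=\Gamma_{\{5,3,6\}}$), and for the combinatorial isomorphism of any two \emph{dodecahedral} tessellations of the same manifold all match the paper's proof, with somewhat more of the Mostow-rigidity bookkeeping spelled out.

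There is, however, one piece of the statement your proof does not cover: ``unique \emph{Platonic} tessellation'' also requires ruling out the possibility that a cusped dodecahedral manifold carries a Platonic tessellation of a \emph{different} type (tetrahedral, octahedral, or cubical). Your uniqueness argument starts from ``two cusped dodecahedral tessellations $T_1, T_2$'' and so never addresses this. The paper closes this off with one sentence: a Platonic tessellation of type $\{p,q,r\}$ on $M$ would realize $\pi_1(M)$ as a finite-index subgroup of $\Gamma_{\{p,q,r\}}$ (up to conjugacy), making $\Gamma_{\{p,q,r\}}$ commensurable with $\Gamma_{\{5,3,6\}}$; this is impossible because $\Gamma_{\{5,3,6\}}$ is the only non-arithmetic symmetry group in Table~\ref{tbl:modelRegTess} and hence not commensurable with any of the others. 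This is an easy fix, but as written your proof only establishes uniqueness among dodecahedral tessellations.
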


\begin{proof}
Given a manifold $M$ commensurable with $\BH^3/\Gamma_{\{5,3,6\}}$, we obtain a dodecahedral tessellation on $M$ (induced from the model regular tessellation $\{5,3,6\}$) by choosing a $\Gamma\subset\Gamma_{\{5,3,6\}}$ such that $M\cong\BH^3/\Gamma$. Two such choices of $\Gamma$ differ by conjugation by $g$. Since both $\Gamma$ and $g\Gamma g^{-1}$ are finite index subgroups of $\Gamma_{\{5,3,6\}}$, they are commensurable and thus $g\in\Comm(\Gamma_{\{5,3,6\}})=\Gamma_{\{5,3,6\}}$. Hence, two such choices yield the same tessellation. Furthermore, a cusped dodecahedral manifold never admits a non-dodecahedral Platonic tessellation since $\Gamma_{\{5,3,6\}}$ is not commensurable with any other symmetry group in in Table~\ref{tbl:modelRegTess}. Similarly, a symmetry of $M$ corresponds to an element $g\in \Isom(\BH^3)$ such that $\Gamma=g\Gamma g^{-1}$ and thus again $g\in\Comm(\Gamma)=\Comm(\Gamma_{\{5,3,6\}})=\Gamma_{\{5,3,6\}}$, so the symmetry is not hidden by the tessellation.
\end{proof}

This is in contrast to all other tessellation types $\{p,q,r\}$ in Table~\ref{tbl:modelRegTess} where Margulis Theorem says that the commensurator of the symmetry group $\Gamma_{\{p,q,r\}}$ is dense in $\Isom(\BH^3)$ since they are arithmetic.
Thus, we expect examples of Platonic manifolds with non-unique tessellations and symmetries hidden by tessellations. An example is \texttt{otet10\_00027\#0}, see Section~\ref{section:tessTools}.
Note that these examples have at least two cusps since Lemma~5.15 in \cite{FGGTV:tetrahedralCensus} generalizes to cusped Platonic and to the closed Platonic tessellations of non-self dual type.

\subsection{Cusped cubical and tetrahedral tessellations}

Exactly two symmetry groups in Table~\ref{tbl:modelRegTess} are commensurable, namely $\{3,3,6\}$ and $\{4,3,6\}$ (also see \cite{neumannReid:SmallVolOrbs}). More explicitly, an ideal regular cube can be subdivided into five regular ideal tetrahedra (see, e.g., \cite[Figure~1]{FGGTV:tetrahedralCensus}) introducing a new diagonal on each face of the cube. Given an ideal cubical tessellation, we can subdivide each cube into regular tetrahedra individually and obtain a tetrahedral tessellation if the choices of the newly introduced diagonals are compatible with the face pairing of the cubes. There is either no way or exactly two ways of subdividing a cusped cubical tessellation into a tetrahedral tessellation. These correspond to two-colorings of the 1-skeleton of the cubical tessellation regarded as a graph where vertices correspond to cusps and edges to edges. Namely, fix a color and draw a diagonal on each cubical face between vertices of the that color to obtain the subdivision. In particular, a 1-cusped cubical tessellation cannot be divided into a tetrahedral one.

\begin{example}
\texttt{ocube01\_00001\#0} is a cubical tessellation that cannot be subdivided into a tetrahedral tessellation. \texttt{ocube02\_00026\#0} and \texttt{ocube02\_00027\#0} can both be subdivided into tetrahedral tessellations. The two possible choices of coloring yield two combinatorially non-isomorphic tetrahedral tessellations for \texttt{ocube02\_00026\#0} but only one tetrahedral tessellation up to combinatorial isomorphism for \texttt{ocube02\_00027\#0}.
\end{example}

\subsection{Regular tessellations}

Table~\ref{table:RegTess} lists all regular tessellations in the census as defined in Definition~\ref{def:RegTess} and compares them with the characterization given in \cite{Goerner:RegTessLinkComps} which classified regular tessellations with small cusped modulus.

\begin{table} \label{table:RegTess}
\caption{Regular tessellations in the census of Platonic tessellations. See \cite{Goerner:RegTessLinkComps} for notation.}

\bigskip

\begin{center}
\begin{tabular}{l|c}
Platonic census name & Other name \\ \hline \hline
\texttt{otet10\_00027} & $\MunivPrinCong{\{3,3,6\}}{2}$ \\ \hline
\texttt{ooct04\_00042\#1} & $\MunivPrinCong{\{4,3,6\}}{2}$ \\ \hline
\texttt{ooct05\_00059\#1} & $\MunivPrinCong{\{4,3,6\}}{2+i}$ \\ \hline
\texttt{ooct08\_354962\#2} & $\mathcal{Z}\in \mathcal{C}^{\{3,4,4\}}_{2+2i}$ \\ \hline
\texttt{ocube02\_00042} & $\mathcal{Z}_0\in \mathcal{C}^{\{4,3,6\}}_2$ \\ \hline
\texttt{ocube06\_09263} & - \\ \hline
\texttt{ocube06\_03577\#1} & $\MunivPrinCong{\{4,3,6\}}{1+\zeta}$ \\ \hline
\texttt{odode02\_00912} & $\mathcal{Z}_0\in \mathcal{C}^{\{5,3,6\}}_2$ \\ \hline \hline
\texttt{ododecld01\_00007\#0} & Seifert-Weber space 
\end{tabular}
\end{center}
\end{table}

\subsection{Tools for further investigations} \label{section:tessTools}

We implemented various methods to check whether a given Platonic tessellation has the properties described earlier, see \cite[tools/]{goerner:platonicCensusData}. We provide a script that can be used from the shell and summarizes these properties. Here is an example of its usage:

\begin{verbatim}
$ python tools/showProperties.py otet10_00027
Properties of otet10_00027   (isometric to ocube02_00025)
  Number of tessellations: 2
  otet10_00027#0: self_dual - regular -   chiral - hidesSyms YES (48/240)
                  (coarsens to ocube02_00025#0)
  otet10_00027#1: self_dual - regular YES chiral - hidesSyms -   (   240)
                  (coarsens to ocube02_00025#0)
\end{verbatim}

It shows that the Platonic manifold $\texttt{otet10\_00027}$ has two combinatorially non-isomorphic tetrahedral tessellations. Both are actually obtained from the two different choices when subdividing the cubes of the tessellation $\texttt{ocube02\_00025\#0}$ into tetrahedra. The first tetrahedral tessellation hides symmetries (it has 48 combinatorial automorphisms but the underlying tetrahedral manifold has 240 isometries). The second tetrahedral tessellation has no hidden symmetries, in fact, it is a regular tessellation, namely the complement of the minimally twisted 5-component chain link as described in \cite{dunfield:virthaken}.

\section{Augmented knotted trivalent graphs} \label{sec:AugKTG}

Introduced in \cite{Roland:AKTG}, all augmented knotted trivalent graphs (AugKTG) are obtained from the complete, planar graph of 4 vertices by applying A-, U-, and X-moves. An A-move replaces a trivalent vertex by a triangle and a U-, respectively, X-move unzips an edge between two distinct trivalent vertices while adding an unknotted component about the edge and optionally introducing a half-twist (X-move). Figure~\ref{fig:AugKTGExample} shows an example. Without loss of generality, we can assume that all A-moves are applied before any U- or X-move. Here, we assume that $n$ A-moves are always followed by $n+2$ U- and X-moves so that the resulting AugKTG is a link, whose complement is Platonic and can be tessellated by $2(n+1)$ octahedra \cite[Lemma~3]{Roland:AKTG}. Recall that links are in general not determined by their complement and we list only one AugKTG for each class of AugKTGs with homeomorphic complement.

We have enumerated AugKTGs up to 6 (for the small census), respectively 8 (for the large census) octahedra, see Table~\ref{table:census344}. The diagrams can be found in \cite[AugKTG/diagrams]{goerner:platonicCensusData}. They are also shipped with the census (comments from Section~\ref{sec:SnapPyCensus} apply) and can be accessed as follows:

\begin{verbatim}
>>> AugKTGs = OctahedralOrientableCuspedCensus(isAugKTG=True)
>>> for M in AugKTGs[:10]: # For first 10
...    print M.DT_code() # Show DT code
...    M.plink() # And link diagram
[(10, -20), (2, 26, -16), (-4, 24, -6, -22, 14), (8, 12, -18)]
       ...
\end{verbatim}

\begin{figure}
\includegraphics[height=7cm]{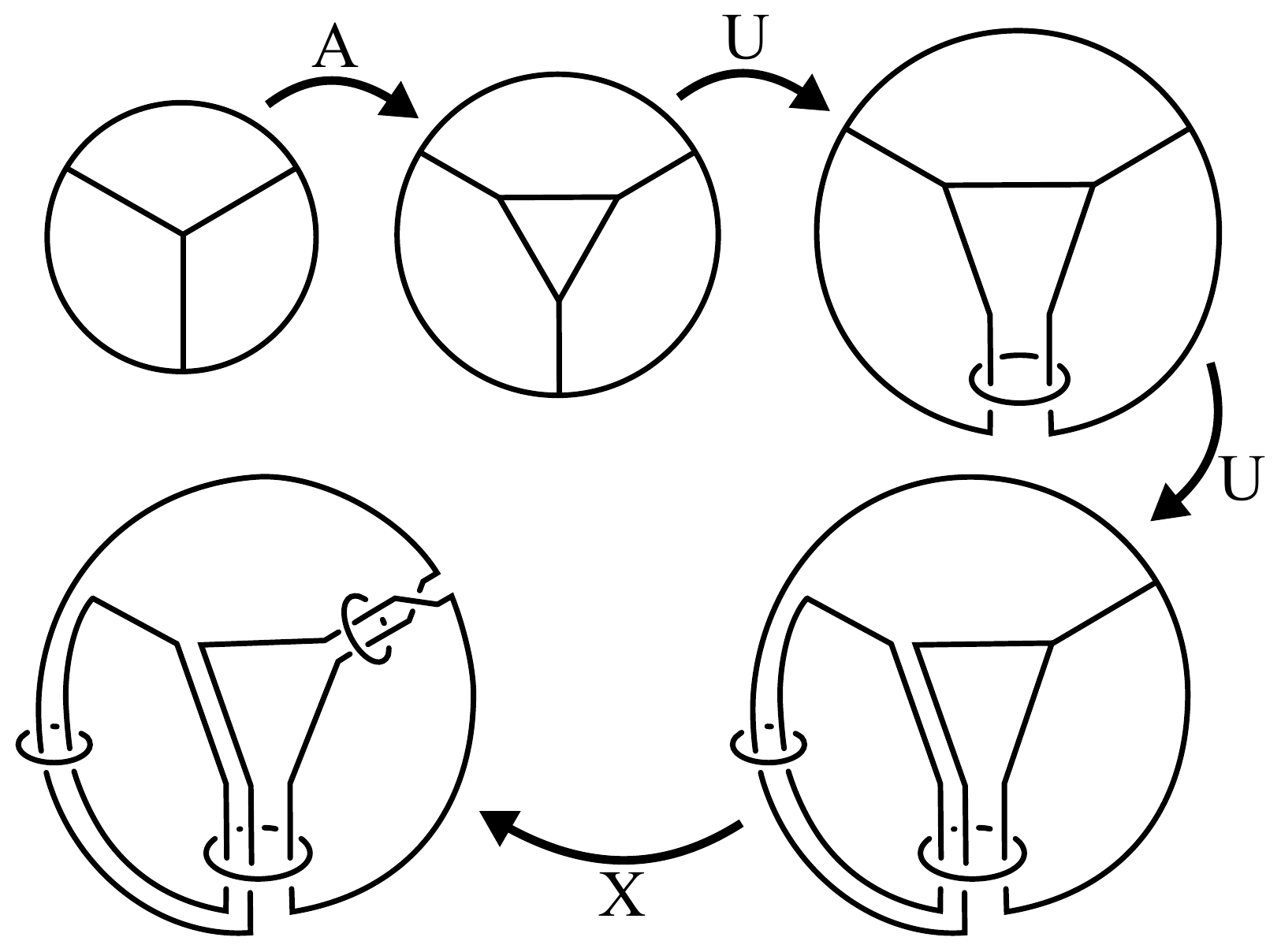}
\caption{Construction of an augmented knotted trivalent graph.\label{fig:AugKTGExample}}
\end{figure}

We now describe the algorithm for enumerating AugKTGs, which we implemented in C++ and boost.

\subsection{Presentation of AugKTGs}

We encode a planar projection of an AugKTG as a fat graph with an extra flag for each half-edge to indicate under-crossings. A fat graph is a graph together with a cyclic ordering of the half-edges adjacent to a vertex for each vertex. We use half-edges to encode a fat graph where we store for each half-edge $h$
\begin{enumerate}
\item a pointer to $h_{\mathrm{other}}$, the other half edge that together with $h$ forms an edge
\item a pointer to $h_{\mathrm{next}}$, the half-edge adjacent to the same vertex as $h$ and next in the cyclic ordering.
\end{enumerate}

In case of an AugKTG, each vertex of the fat graph is either trivalent or quadvalent to indicate a crossing where for two opposite half-edges the extra flag is set to indicate that they are crossing under the other two half-edges.

\subsection{Isomorphism signature for AugKTGs}

We can apply the ideas behind Burton's isomorphism signature for 3-dimensional triangulations \cite{burton:encode, burton:encode2} to fat graphs to obtain a complete invariant of a fat graph up to fat graph isomorphism. Given a fat graph encoded as above, there is a deterministic algorithm similar to the one in Section~\ref{sec:specialIsoSig} to traverse the half-edges in an order $h_0, h_1, \dots, h_{k-1}$ that only depends on the choice of a start half-edge $h_0$. We chose an algorithm so that $h_{2i+1}=(h_{2i})_{\mathrm{other}}$. Once, we have (re-)indexed the half-edges in the traversal order, we can encode the fat graph as a tuple by taking for each half edge the index of the next half edge. And, given a fat graph, we can similarly to Section~\ref{sec:specialIsoSig} pick the lexicographically smallest tuple among all the tuples obtained from different choices of start half-edges. This gives as an isomorphism signature for fat graphs.

If we add to the tuple each half edge's extra flag to indicate under-crossings, we have an isomorphism signature for planar projections of AugKTGs. For speed, we added extra code to process a planar projection of an AugKTG before computing its isomorphism signature such that we obtain the same result when
\begin{enumerate}
\item flipping all crossings of a planar projection of an AugKTG
\item swapping all crossings of a ``belt'' - an unknotted circle that splits into two parts with one part only under-crossings and one part only over-crossings.
\end{enumerate}  

\subsection{Enumeration}

The algorithm to enumerate all AugKTGs takes as input the number of A-moves. It then recursively performs first all possible A-moves and then all possible U- and X-moves. Many different sequences of these moves can yield to the same AugKTG. To reduce the re-enumeration of the same AugKTG, we keep a set of isomorphism signatures of AugKTGs and stop recursing if we encounter an isomorphism signature that was already added earlier. In the recursion, we also simplify the projection of the AugKTG by applying Reidemeister I moves when possible.

The program emits the resulting links as PD codes and we identify their complements in the octahedral census.

\section{Potential applications}

We say that a hyperbolic 3-manifold $M$ bounds geometrically if $M$ is the totally geodesic boundary of a complete, finite volume hyperbolic 4-manifold. Recent work has shown interesting connections between Platonic manifolds and 3-manifolds bounding geometrically. For example, Martelli \cite{martelli:boundingGeometrically} shows that octahedral orientable cusped and cubical orientable closed (dual to right-angled closed dodecahedral) manifolds bound geometrically. In case of tetrahedral cusped manifolds, this is not known in general but Slavich \cite{slavich:figEightBoundingGeometrically} gives a construction of a hyperbolic 4-manifold bounded geometrically by the complement of the figure-eight knot.

Both the censuses and the techniques given in this paper might be useful for further investigations. In particular, the techniques could be generalized to 4-dimensional locally regular tessellations (perhaps using the upcoming extension of Regina to 4-manifolds), which form the basis of the constructions in \cite{martelli:boundingGeometrically, slavich:figEightBoundingGeometrically}.

\subsection*{Acknowledgements}
The author gratefully thanks Stavros Garoufalidis for many helpful conversations. The author was supported in part by a National Science Foundation grant DMS-11-07452.

\clearpage

\section*[Appendix]{Appendix\except{toc}{: Hyperbolic ideal cubulations can be subdivided\\ into ideal geometric triangulations}}

\begin{abstract}
Consider a cusped hyperbolic 3-manifold that can be decomposed into (not necessarily regular) ideal convex cubes. We prove that the cubes can be subdivided into non-flat ideal tetrahedra in such a way that they form an ideal geometric triangulation.
\end{abstract}

\maketitle

\renewcommand{\thetheorem}{\arabic{theorem}}

\begin{theorem}[Main theorem]
An ideal cubical tessellation, or more generally, a cell decomposition of a hyperbolic 3-manifold into ideal geodesic convex cubes can be subdivided into an ideal geometric triangulation.
\end{theorem}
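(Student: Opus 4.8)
\emph{Proof proposal.} The plan is to subdivide each ideal cube of the given decomposition $\mathcal{C}$ individually, using no vertices other than the eight ideal corners of the cube, and to make the choices so that the two subdivisions induced on the two sides of every square $2$--cell agree. Granting this, the pieces assemble into a triangulation of $M$ all of whose vertices are ideal; it will be \emph{geometric} as soon as no tetrahedron is flat, because each cube is a \emph{convex} ideal polyhedron and is genuinely tiled by the tetrahedra of any vertex--triangulation of it. So the theorem reduces to: (i) choosing, for every square $2$--cell, one of its two diagonals and, for every cube, a triangulation of the cube without new vertices inducing exactly those diagonals, compatibly with all face identifications; and (ii) checking that no resulting tetrahedron has its four ideal corners on a common circle of the sphere at infinity.

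For (i) the natural device is the \emph{pulling} (lexicographic) triangulation: having linearly ordered the eight corners of a cube, one cones from the smallest corner over the far faces, triangulating each far square recursively by the same rule. This triangulation has the pleasant feature that the diagonal it places on \emph{every} face of the cube -- near faces included -- is the one joining the smallest of that face's four corners to the opposite corner; hence the diagonals agree across a $2$--cell provided only that the two linear orders restrict to the same order on the four corners of that $2$--cell. I would therefore fix a total order on the cusps of $M$ as the primary key. The catch is that several corners of one cube can lie at the same cusp -- this already happens for manifolds with few cusps -- so a secondary, gluing--invariant key is needed; I would construct one from the horospherical cross--sections $\Sigma_v$ at the cusps (closed Euclidean surfaces tiled by the triangular cross--sections of the incident corners) by fixing a total order on the cells of each $\Sigma_v$ and reading off, for a corner of a $2$--cell at $v$, the corresponding cell of $\Sigma_v$, which is the same seen from either adjacent cube.

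For (ii) note that, in a pulling triangulation of a cube, every tetrahedron is the cone over a triangle contained in a single face together with the apex, which is a corner \emph{not} on that face. Three of its ideal corners therefore lie on the circle bounding that ideal quadrilateral face, while the fourth, by strict convexity of the cube, lies off the plane of that face; so the four corners are not concyclic and the tetrahedron is a nonflat ideal tetrahedron (positively oriented when $M$ is oriented). Thus the tetrahedra glue up by isometries along faces into a geometric ideal triangulation of $M$.

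The step I expect to be the real obstacle is making the ordering in (i) genuinely work in the presence of symmetric gluings -- in particular when a cube is glued to itself and two of its own corners are identified in $M$, so that no function constant on the corner classes of $\mathcal{C}$ can be injective on that cube's eight corners and the ``smallest corner'' of a face becomes ambiguous. Handling these cubes will require either a tie--breaking rule fine enough to separate their corners despite the identification (for which the cross--section orders above are the candidate tool) or replacing the pulling triangulation of such a cube by a fixed symmetric one adapted to the gluing; verifying that some choice always exists, and that no cube is thereby forced into one of the two diagonal patterns that admit no triangulation without new vertices, is where the combinatorial work lies. By contrast the hyperbolic--geometric input -- nonflatness and the fact that a convex ideal cube is tiled by the simplices of any vertex--triangulation -- is routine once the combinatorics is in place.
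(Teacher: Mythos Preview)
Your local mechanism---cone each cube from a chosen corner so that the three incident face-diagonals meet there---and your non-flatness check (ii) are exactly the content of the paper's Lemma. Where you diverge from the paper is in how the cone vertex, and hence the face diagonals, are selected, and that is precisely where your proposal remains incomplete.

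You want to select diagonals via a global total order on corners and then pull. As you correctly diagnose, gluing-invariance forces any such order to factor through the $0$-cells of $M$; whenever a single cube has two corners at the same cusp---the generic situation for manifolds with few cusps---the ``smallest corner'' of that cube, and sometimes of its faces, becomes ambiguous. Your proposed secondary key does not repair this: a corner of a \emph{cube} at $v$ contributes a triangular \emph{cell} to $\Sigma_v$, but a corner of a \emph{$2$-cell} at $v$ sits on the \emph{edge} shared by the two cells coming from the two adjacent cubes, so ``the corresponding cell of $\Sigma_v$'' is not the same from the two sides as you assert, and in any case it does not furnish a single order usable both for face-diagonals and for picking the cube's cone point. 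You close by saying that verifying a consistent choice always exists ``is where the combinatorial work lies''; that work is not carried out, so the argument has a genuine gap.

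The paper bypasses vertex orderings entirely. It groups the $2$-cells into \emph{face cycles}: maximal sequences $f_0,f_1,\dots,f_{k-1}$ in which $f_i$ and $f_{i+1}$ are opposite faces of a common cube. Orienting each cycle selects, for every cube and each of its three opposite-face pairs, exactly one face of the pair---even when the cycle passes through that cube several times. Three pairwise non-opposite faces of a cube meet in a common vertex; put the diagonals of those three faces through that vertex. Since each $2$-cell of $M$ receives its diagonal from exactly one of its two sides, the diagonals are globally consistent by construction, with no ordering and no tie-breaking needed. The Lemma then triangulates each cube by coning from its chosen vertex, requiring only that the \emph{three} owned diagonals meet there; the other three faces may carry whatever diagonals the neighbouring cubes imposed. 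This face-cycle device is exactly the missing combinatorial ingredient your ordering approach was unable to supply.
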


\begin{lemma}
Consider an ideal convex cube with a choice of one of the two diagonals for each face. These diagonals come from a subdivision of the cube into non-flat ideal tetrahedra if the cube has a vertex $v$ adjacent to three of the chosen diagonals.
\end{lemma}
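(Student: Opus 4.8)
The plan is to produce the subdivision by \emph{coning the cube from $v$}, and to check the combinatorics and the geometry separately by moving $v$ to $\infty$ in the upper half-space model. For the combinatorics, identify the vertices of the cube with $\{0,1\}^3$ so that $v=(0,0,0)$; the three faces through $v$ are then $\{x_i=0\}$ and the other three, the ``far'' faces, are $\{x_i=1\}$. Each face through $v$ has exactly two diagonals, exactly one through any prescribed vertex, so the chosen $v$-diagonals necessarily join $v$ to the three vertices of the cube at distance $2$ from it; on each far face the chosen diagonal is arbitrary, and splits that quadrilateral into the two triangles it determines. I claim the six tetrahedra obtained by coning these six triangles to $v$ form the desired geometric subdivision.

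Next I would set up the half-space model. Apply an isometry of $\BH^3=\mathbb{C}\times\mathbb{R}_{>0}$ sending $v$ to $\infty$. A face through $v$ is a geodesic plane containing $\infty$, hence a vertical half-plane over a line of $\mathbb{C}$; the three of them lie over the sides of the Euclidean triangle $\triangle$ cut out on a horosphere about $v$. A far face $F$ does not contain $v$; since $v$ is a vertex of the convex cube not lying on $F$, it does not lie on the supporting plane $P_F$ of $F$, so in the model $\infty\notin C_F:=\overline{P_F}\cap\partial\BH^3$, i.e. $C_F$ is a genuine bounded circle in $\mathbb{C}$ and $P_F$ is a Euclidean hemisphere. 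Convexity then forces the cube to be the ``chimney'' lying over $\triangle$ and above all three of these hemispheres; equivalently it is the union of the geodesic cones from $\infty$ over the three far faces, and these cones have pairwise disjoint interiors because the far faces do and their vertical projections tile $\triangle$.

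For the assembly and non-flatness: each far face is a convex ideal quadrilateral, so its chosen diagonal splits it into two triangles tiling it, and coning each to $\infty$ yields an ideal tetrahedron equal to the corresponding slice of the cone over that far face. Hence the six tetrahedra have disjoint interiors and union the whole cube, and their $2$-faces match up in pairs (the ``diagonal walls'' over the far-face diagonals and the cones over far-face edges) apart from the six half-far-faces lying on the boundary; tracking the cones over the edges of the far faces shows that the three faces of the cube through $v$ get triangulated exactly by the prescribed $v$-diagonals. Finally, each of the six tetrahedra has $\infty$ as one vertex and its other three vertices among the vertices of some far face, hence on the bounded circle $C_F$; three distinct points on a genuine circle are never collinear, and an ideal tetrahedron with a vertex at $\infty$ is flat precisely when its other three vertices are collinear, so every one of the six tetrahedra is non-flat. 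This produces a geometric ideal triangulation of the cube realizing the given diagonals.

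The coning bookkeeping is routine; the steps needing care are the two convexity inputs — that the far-face planes really are genuine hemispheres missing $v$, and that the cube is exactly the vertical chimney over $\triangle$ so that the cones tile it without overlap — after which non-flatness falls out for free from the single observation that a far face lies on a round hemisphere, so no three of its vertices are collinear once $v$ is sent to $\infty$.
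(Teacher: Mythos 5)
Your proof is correct and takes essentially the same approach as the paper: cone the two triangles of each face not containing $v$ to the vertex $v$, obtaining six ideal tetrahedra. The paper states this coning construction in one line and leaves the geometric verification implicit, whereas you additionally carry out that verification explicitly (sending $v$ to $\infty$ in the upper half-space model, showing the cube is the chimney over the horospherical triangle so the cones tile it, and deducing non-flatness from the fact that the vertices of each far face lie on a genuine round circle); this is a welcome elaboration but not a different argument.
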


\begin{proof}
Consider the 2-cell complex obtained by subdividing the cube's surface along the given diagonals and remove all cells that are adjacent to $v$. Coning this 2-cell complex to $v$ yields a subdivision of the cube.
\end{proof}

\begin{remark}
This construction was inspired by Lou, Schleimer, Tillmann \cite{LuoSchleimerTillmann:virtualGeometricTrig} and generalizes to any convex polyhedra $P$ with a choice of non-intersecting face diagonals subdividing each face into triangles: $P$ can be subdivided into non-flat ideal tetrahedra compatible with the given choice of diagonals if $P$ has a vertex $v$ such that for each face $f$ adjacent to $v$, each diagonal on $f$ is also adjacent to $v$.
\end{remark}

\begin{remark}
A case by case analysis for the cube actually reveals that a choice of diagonals comes from a subdivision of the cube if and only if the cube has a vertex adjacent to either three or none of the chosen diagonals.
\end{remark}

\begin{proof}[Proof of main theorem]
We call a sequence $f_0, f_1, f_2, \dots, f_{k-1}$ of distinct faces of the cubulation a face cycle if $f_i$ and $f_{i+1}$ are opposite faces of the same cube for each $i=0, \dots, {k-1}$ (indexing is cyclic so $f_0=f_k$). Note that the reverse $f_{k-1}, f_{k-2}, \dots, f_0$ is also a face cycle. Thus, the faces of a cubulation naturally partition into unoriented face cycles, but we can fix a choice of orientation for each face cycle.\\
Recall that each face of the cubulation corresponds to two faces of two (not necessarily distinct) cubes. Orienting the face cycles gives a canonical way to pick one of those two faces for each face of the cubulation. For each cube and each pair of opposite faces of that cube, one of the two faces will be picked - even if the face cycle runs through the cube multiple (up to 3) times.\\
The three faces picked from a cube will thus meet at a vertex. Pick the diagonals of those three faces so that they meet at that vertex. By the above Lemma, this choice of diagonals allows the cubes to be subdivided.
\end{proof}

\clearpage

%Stavros Garoufalidis. Christian Zickert. Nathan Dunfield. Marc Culler. Maybe Frank Swenton. 
%The author wishes to thank Marc Culler, Nathan Dunfield, and Benjamin Burton for creating and maintaining SnapPy, respectively, Regina.

% BibTeX thinks that Bur11b is the widest label when in fact it is FGG+16.
%\forceLongestBibliographyLabelToFixBibTeXBug{FGG+16}

\bibliographystyle{hamsalpha}
\bibliography{platonicCensusBib}

\end{document}